\newtheorem{theorem}{Theorem}[section]
\newtheorem{lemma}[theorem]{Lemma}
\theoremstyle{definition}
\theoremstyle{remark}
\DeclareMathOperator{\Aut}{Aut}
\DeclareMathOperator{\Irr}{Irr}
\DeclareMathOperator{\lcm}{lcm}
\DeclareMathOperator{\SL}{SL}
\newcommand{\cA}{{\mathcal A}}
\newcommand{\cB}{{\mathcal B}}
\DeclareMathOperator{\Sym}{S}
\DeclareMathOperator{\PSL}{PSL}
\DeclareMathOperator{\PSU}{PSU}
\DeclareMathOperator{\PGL}{PGL}
\newcommand{\cd}{{\mathrm {cd}}}
\newcommand{\Centralizer}{\mathbf{C}}
\numberwithin{equation}{section}
\newcommand{\Alt}{{\mathrm {A}}}
\newcommand{\Sz}{{\mathrm {Sz}}}
\begin{document}

\title[two-prime hypothesis]{Groups satisfying the two-prime hypothesis with a composition factor isomorphic to $\PSL_2(q)$ for $q\geq 7$}

\author[M. L. Lewis]{Mark L. Lewis}
\address{Department of Mathematical Sciences, Kent State University, Kent, OH 44242, USA}
\email{lewis@math.kent.edu}

\author{Yanjun Liu}
\address{
College of Mathematics and Information Science, Jiangxi Normal University, Nanchang, $330022$, P.R. China}
\email{liuyj@math.pku.edu.cn  }

\author[H. P. Tong-Viet]{Hung P. Tong-Viet}
\address{Department of Mathematical Sciences, Kent State University, Kent, OH 44242, USA}
\email{htongvie@math.kent.edu}

\begin{abstract} Let $G$ be a finite group, and write $\cd(G)$ for the degree set of the complex irreducible characters of $G$.  The group $G$ is said to satisfy the {\it two-prime hypothesis} if, for any distinct degrees $a, b \in \cd(G)$, the total number of (not necessarily different) primes of the greatest common divisor $\gcd(a, b)$ is at most $2$.
In this paper, we prove an upper bound on the number of irreducible character degrees of a nonsolvable group that has a composition factor isomorphic to $\PSL_2 (q)$ for $q \geq 7$.
\end{abstract}

\thanks{}

\subjclass[2010]{Primary 20C15; Secondary 20D05}


\date{January 11, 2017}

\keywords{Character degrees; prime divisors}

\maketitle

\section{Introduction}

Throughout this paper, $G$ will be a finite group, $\Irr(G)$ will be the set of complex irreducible characters of $G$, and $\cd(G) = \{ \chi (1) \mid \chi \in \Irr(G) \}$ will be the degree set of $\Irr(G)$.  Following \cite{Hamblin}, we will say that a group $G$ satisfies the {\it two-prime hypothesis} if whenever $a, b \in \cd(G)$ with $a \ne b$, then $\gcd (a,b)$ is divisible by at most two primes counting multiplicity.  Solvable groups satisfying the two-prime hypothesis were studied in \cite{Hamblin, HamblinL} where an upper bound was determined for the number of character degrees for such groups.

In \cite{LL}, the first two authors started studying nonsolvable groups that satisfy the two-prime hypothesis.  In that paper, we determined that any nonabelian chief factor of a group satisfying the two-prime hypothesis will be simple or isomorphic to $\Alt_5 \times \Alt_5$.  They also determined which nonabelian simple groups can occur as the chief factor of a group satisfying the two-prime hypothesis.

In this paper, we continue to study the nonsolvable groups that satisfy the two-prime hypothesis, and in particular, we focus on bounding the number of character degrees of such a group.  In \cite{LLT}, we showed when $G$ is a nonsolvable group satisfying the two-prime hypothesis where none of the composition factors is isomorphic to any $\PSL_2 (q)$ for a prime power $q$ that $|\cd (G)| \le 21$.  We also gave an example of a group $G$ satisfying the two-prime hypothesis where this bound was met.  We note that the group $G$ had one nonabelian composition factor and that composition factor was isomorphic to $\rm{A}_7$.

The main purpose of this second paper is to give an upper bound for $|\cd(G)|$ when $G$ is a nonsolvable group satisfying the two-prime hypothesis that has a nonabelian composition factor that is isomorphic to $\PSL_2 (q)$ for any prime power $q$.

\begin{theorem}\label{th:thispaper}
Let $G$ be a group satisfying the two-prime hypothesis.  If $G$ has a composition factor isomorphic to $\PSL_2 (q)$ for some prime power $q \ge 7$, then $|\cd(G)| \leq 19$.
\end{theorem}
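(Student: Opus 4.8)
The plan is to reduce $G$ to an almost simple configuration, classify which such configurations survive the two-prime hypothesis, and then count degrees after reinstating the solvable radical. First I would let $R$ be the solvable radical of $G$ and set $\bar G=G/R$. By the results of \cite{LL} quoted above, every nonabelian chief factor of $G$ is simple or $\cong\Alt_5\times\Alt_5$; since $q\ge 7$ rules out $\Alt_5\cong\PSL_2(4)\cong\PSL_2(5)$, no chief factor is $\Alt_5\times\Alt_5$ and none is $\PSL_2(q)\times\PSL_2(q)$. I would next prove a preliminary lemma that $G$ has a \emph{unique} nonabelian composition factor: two distinct nonabelian composition factors would force products of their nontrivial degrees into $\cd(G)$, and a short argument shows the resulting greatest common divisors carry at least three primes. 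Granting this, $\bar G$ has trivial solvable radical and a single nonabelian minimal normal subgroup equal to its socle $\PSL_2(q)$, so $\PSL_2(q)\trianglelefteq\bar G\le\Aut(\PSL_2(q))$ is almost simple.

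Next I would determine the admissible $\bar G$. Writing $q=p^f$, the group $\bar G$ is $\PSL_2(q)$ enlarged by part of the diagonal automorphism (of order $\gcd(2,q-1)$) and a subgroup of the field automorphisms (cyclic of order $f$). The governing observation is that the field automorphisms permute the $q\pm1$ characters lying over the two tori, so their orbits contribute several distinct degrees all divisible by $q-1$ or by $q+1$; the pairwise greatest common divisors of these degrees are then divisible by $q-1$ (resp.\ $q+1$), which forces $q-1$ or $q+1$ to have at most two prime factors counting multiplicity whenever a nontrivial field automorphism is present. Likewise the pair $(q\pm1)/2,\,q\pm1$ already lies in $\cd(\PSL_2(q))$ with gcd $(q\pm1)/2$, constraining the half-degrees. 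This yields a clean dichotomy: either one of $q-1,\,q+1$ has three or more prime factors, forcing the field-automorphism part to be trivial and $\bar G$ down to $\PSL_2(q)$ or $\PGL_2(q)$ with only four or five degrees, or $q$ is special and a bounded amount of field and diagonal structure is permitted, introducing only finitely many further degrees.

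I would then reinstate $R$ and any central part. A perfect central extension may replace $\PSL_2(q)$ by $\SL_2(q)$, adjoining the two half-degrees $(q\pm1)/2$ and raising the base count to six; beyond this, Clifford theory expresses $\cd(G)$ in terms of $\cd(\bar G)$ together with the $G$-orbits of $\Irr(R)$ and their inertia. The key point is that any two degrees of $G$ sharing a common $\PSL_2(q)$-part (one of $q$, $q\pm1$, or $(q\pm1)/2$) have already committed that part to their gcd, so the two-prime hypothesis caps the number of distinct ``multipliers'' that $R$ can contribute over each such part. Assembling the estimates---at most six degrees from the enlarged simple section, times a small bounded number of radical-induced multipliers, together with the linear degrees from $G/G'$---gives $|\cd(G)|\le 19$, and I would exhibit the extremal configuration to confirm that $19$ is attained.

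The step I expect to be hardest is the bookkeeping at the boundary of the dichotomy: exactly when $q-1$ or $q+1$ factors into two primes, both nontrivial automorphisms and a nontrivial radical are simultaneously allowed, and one must count the resulting degrees without double-counting those that the gcd constraints force to coincide, while correctly tracking which $q\pm1$-characters are fixed versus moved by the field automorphisms. Finally, the genuinely exceptional small cases---above all $q=9$, where $\PSL_2(9)\cong\Alt_6$ has an exceptional Schur multiplier $\bbZ/6$ and enlarged outer automorphism group, and to a lesser extent $q=7\ (\cong\PSL_3(2))$ and $q=8$---lie outside the generic degree pattern and would be settled by direct computation with their character tables.
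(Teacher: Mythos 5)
Your overall architecture (reduce to an almost simple quotient with socle $\PSL_2(q)$, classify the admissible almost simple groups via White's degree list and the constraints $\omega(q\pm1)\le 2$, then count the extra degrees contributed by the kernel) matches the paper's in outline, and your middle paragraph is essentially the paper's Theorem \ref{AlmostL2(q)}. But there are two genuine gaps. First, your reduction runs through a lemma that $G$ has a \emph{unique} nonabelian composition factor, justified by the claim that two distinct nonabelian composition factors ``force products of their nontrivial degrees into $\cd(G)$.'' That is not automatic: for composition factors sitting at different levels of a chief series, Clifford theory does not simply multiply degree sets (a character of the lower section need not extend, need not be invariant, and the induced degrees involve inertia indices rather than degrees of the upper factor), so this is not a short argument and may not even be provable in the generality you need. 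The paper avoids the issue entirely: it locates one chief factor $K/L\cong\PSL_2(q)$, sets $N=CL$ where $C/L$ is the centralizer, and proves the bound for $G/N$ almost simple with $N$ an \emph{arbitrary} (possibly nonsolvable) normal subgroup. Your reduction to $G/R$ with $R$ the solvable radical only works after the uniqueness lemma, so this gap is load-bearing.

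Second, and more seriously, the actual counting is missing. The paper's bound $19 = 4+9+6$ comes from splitting $\Irr(N)$ into the $M$-invariant characters ($\cB_1$, bounded by $4$ via a case analysis of extendibility and the structure of $G/M$) and the non-$M$-invariant ones ($\cB_2$, bounded by $9$ for odd $q\ge 13$). The $\cB_2$ bound is the technical heart: the inertia group of a non-invariant character meets $M$ inside a maximal subgroup of $M/N$, so one must run through Dickson's list (point stabilizers $C_p^f\rtimes C_{(q-1)/2}$, dihedral groups $\mathrm{D}_{q\pm1}$, subfield subgroups, $\Alt_4$, $\Sym_4$, $\Alt_5$) and show each class contributes at most a couple of degrees because the relevant indices are divisible by three primes. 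Your proposal never invokes the maximal subgroups of $\PSL_2(q)$ at all, and the substitute --- ``at most six degrees from the enlarged simple section, times a small bounded number of radical-induced multipliers'' --- is not an argument and does not visibly produce $19$ (naively it produces something larger). Finally, you assert that $19$ is attained; the paper makes no such claim and in fact suggests $21$ (attained via $\Alt_7$) is the likely sharp bound for the combined theorem, so exhibiting an extremal configuration with exactly $19$ degrees is an unsupported promise.
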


Note that the only composition factor not covered by Theorem \ref{th:thispaper} and Theorem 1.1 of \cite{LLT} is $\Alt_5$, so when we combine Theorem \ref{th:thispaper} with Theorem 1.1 of \cite{LLT}, we obtain the following theorem.

\begin{theorem}\label{th:main}
Let  $G$ be a nonsolvable group satisfying the two-prime hypothesis.  If $G$ has no nonabelian composition factors isomorphic to $\Alt_5$, then $|\cd(G)| \leq 21$.
\end{theorem}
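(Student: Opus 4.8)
The plan is to derive Theorem~\ref{th:main} as a formal consequence of Theorem~\ref{th:thispaper} together with Theorem~1.1 of \cite{LLT}, organized around a dichotomy on the nonabelian composition factors of $G$. Since $G$ is nonsolvable it possesses at least one such factor, so the argument reduces to controlling which simple groups can appear and matching them against the two cited results.

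The key preliminary observation I would record concerns the small cases of the projective special linear groups. One has the exceptional isomorphisms $\PSL_2(2) \cong \mathrm{S}_3$ and $\PSL_2(3) \cong \Alt_4$, both of which are solvable, together with $\PSL_2(4) \cong \PSL_2(5) \cong \Alt_5$. Hence among the groups $\PSL_2(q)$ with $q$ a prime power, the only nonabelian simple one with $q < 7$ is $\Alt_5$. This single fact is what makes the ranges covered by the two theorems fit together without a gap.

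With this in hand I would split into two cases. If $G$ has a composition factor isomorphic to $\PSL_2(q)$ for some prime power $q \ge 7$, then Theorem~\ref{th:thispaper} applies verbatim and gives $|\cd(G)| \le 19 \le 21$. Otherwise $G$ has no composition factor isomorphic to $\PSL_2(q)$ for any $q \ge 7$; combining this with the standing hypothesis that no composition factor of $G$ is isomorphic to $\Alt_5$ and with the observation above, we conclude that $G$ has no composition factor isomorphic to $\PSL_2(q)$ for any prime power $q$ whatsoever. Theorem~1.1 of \cite{LLT} then yields $|\cd(G)| \le 21$. In either case the bound $|\cd(G)| \le 21$ holds.

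Since the whole argument is a case split feeding into two already-established theorems, there is no serious analytic obstacle; the only point demanding care is the bookkeeping of the exceptional isomorphisms in the second paragraph, which must be checked so as to guarantee that excluding $\Alt_5$ and excluding the factors with $q \ge 7$ together exhaust all of the $\PSL_2(q)$. The substantive mathematical content lives entirely in Theorem~\ref{th:thispaper}, not in this deduction.
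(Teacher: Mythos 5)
Your proposal is correct and is essentially the same argument as the paper's: a case split on whether $G$ has a composition factor isomorphic to $\PSL_2(q)$ for some $q \ge 7$, invoking Theorem~\ref{th:thispaper} in that case and Theorem~1.1 of \cite{LLT} otherwise, with the exceptional isomorphisms ($\PSL_2(2)$, $\PSL_2(3)$ solvable and $\PSL_2(4)\cong\PSL_2(5)\cong\Alt_5$) closing the gap. You merely make the bookkeeping of those small cases more explicit than the paper does.
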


Here we use the bound in Theorem \ref{th:main} that was found in \cite{LLT}, since there is an example of a nonsolvable group satisfying the two prime hypothesis that has $21$ character degrees and we believe that $21$ is likely the best bound.

To prove Theorem \ref{th:main}, we first determine for which almost simple groups that have a composition factor of a group that is isomorphic to $\PSL_2 (q)$ satisfying the two-prime hypothesis.  We then see that every group that has a composition factor isomorphic to $\PSL_2 (q)$ has a quotient that is one of the above almost simple groups.  We then break up the possible character degrees that can occur into three different subsets, and we obtain a bound on each of these possible subsets when $q \ge 7$.  We make strong use of the fact that two of $q-1$, $q$, and $q+1$ are divisible by many primes, and we will need to pay special attention to the cases where $q$ is small and the number of prime divisors of these integers are fewer.

In \cite{LLT}, we handled the groups satisfying the two prime hypothesis that had a nonabelian composition factor that is not isomorphic to some $\PSL_2 (q)$.  In that paper, we were easily able to handle the groups having those composition factors that are not isomorphic to $A_7$ just using the character degrees of the simple group, degrees of the projective characters, and the indices of the maximal subgroups.  In those cases, there were character degrees and projective character degrees that were divisible by at least three primes, and there were relatively few subgroups whose indices were not divisible by three primes.

Nearly half of \cite{LLT} dealt with the case where the composition factor is $A_7$.  That case is more difficult since all of the character degrees of $A_7$ are divisible by only two primes, the representation group for $A_7$ is more complicated and most of the projective character degrees are divisible by only two primes, and there are several subgroups whose indices are divisible by only one or two primes.  We end up breaking the set of character degrees into five subsets and bounding the sizes of each of those subsets.

The difficulty that arose in the case of $A_7$ also arises when bounding the number of character degrees of a group satisfying the two prime hypothesis with a composition factor that is isomorphic to $A_5$.   Although $A_5\cong \PSL_2(4)$ is smallest in the series $\PSL_2(q)$, the situation appears to be the most difficult.  The first difficulty is that there can be two composition factors that are isomorphic to $A_5$, however that case seems to be relatively easy to dispatch with.  The real difficulty is when there is only one nonabelian composition factor.  The difficulty arises due to the fact that two of the character degrees of $A_5$ are primes and the third degree is the square of a third prime, so all the degrees are relatively primes.  Also, there are many subgroups whose indices are divisible by only one or two primes. All of these make many of the arguments in \cite{LLT} and this paper invalid when dealing with $A_5$.

In fact, to get a bound for the case for $A_5$, we essentially find pairs of character degrees of some Frobenius groups that always occur simultaneously.  Since the proof is quite different from and longer than the proof contained in this paper, we prefer to dealing with it in a future paper.

The notation used in this paper is mostly standard (cf. \cite{Atlas} and \cite{Isaacs}).  We now fix some notation that will be used throughout the paper.  Let $N$ be a normal subgroup of $G$ and let $\theta \in \Irr(N)$. As usual, $I_G(\theta)$ means the inertia group of $\theta$ in $G$, we write $\Irr (G| \theta)$ for the set of irreducible constituents of $\theta^G$, and $\cd (G |\theta) = \{ \chi (1) \mid \chi \in \Irr(G| \theta) \}$.  If $n>1$ is an integer, then $\omega(n)$ denotes the number of prime divisors of $n$ counting multiplicity.

\section{Preliminaries}


In \cite{LL}, the first two authors identified the nonabelian chief factors of nonsolvable groups that satisfy the two-prime hypothesis.  The results are encoded in the following theorem.

\begin{theorem} \label{chief}
Let $G$ be a nonsolvable group satisfying the two-prime hypothesis.  Suppose that $M/N$ is a nonabelian chief factor of $G$.

\begin{enumerate}
  \item If $M/N$ is simple, then it is one of the following groups: $\Alt_5$, $\Alt_6$, $\Alt_7$, $\PSL_2(q)$ for $q\geq 7$, $\PSL_3 (3)$, $\PSL_3 (4)$, $\PSL_3 (5)$, $\PSU_3 (3)$, $\PSU_3 (4)$, $\Sz (8)$, $\Sz (32)$, and ${\rm M}_{11}$.
  \item If $M/N$ is not simple, then $M/N \cong \Alt_5\times \Alt_5$.
\end{enumerate}
\end{theorem}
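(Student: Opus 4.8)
The plan is to reduce the statement to a purely arithmetic condition on the character degrees of the simple group underlying the chief factor, and then to run through the classification of finite simple groups. Since $M/N$ is a nonabelian chief factor, we have $M/N\cong S^k$ for a nonabelian simple group $S$ and some $k\ge 1$, and the goal is to bound the arithmetic complexity of $S$ forced by the two-prime hypothesis. The basic tool is a Clifford-theoretic transfer: given $\theta\in\Irr(S)$, form the outer tensor power $\psi=\theta\times\cdots\times\theta\in\Irr(M/N)$ of degree $\theta(1)^k$, inflate it to $M$, and choose any $\chi\in\Irr(G)$ lying over $\psi$. Writing $I=I_G(\psi)$ and picking $\xi\in\Irr(I\mid\psi)$ with $\chi=\xi^G$, one gets $\theta(1)^k\mid\xi(1)\mid\chi(1)$, so \emph{every} such $\chi$ has degree divisible by $\theta(1)^k$. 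Consequently, if $\theta_1,\theta_2\in\Irr(S)$ produce $\chi_1,\chi_2\in\Irr(G)$ with $\chi_1(1)\ne\chi_2(1)$, then $\gcd(\theta_1(1)^k,\theta_2(1)^k)$ divides $\gcd(\chi_1(1),\chi_2(1))$, and the hypothesis forces $\omega\big(\gcd(\theta_1(1)^k,\theta_2(1)^k)\big)\le 2$.

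I would first settle the non-simple case (ii). Here the chief factor $S^k$ has for its degrees the $k$-fold products $\theta_1(1)\cdots\theta_k(1)$ with $\theta_i\in\Irr(S)$, so prime multiplicities accumulate rapidly and one expects two such products with $\omega(\gcd)\ge 3$. The only survivor is $S=\Alt_5$ with $k=2$: the ten products built from $\cd(\Alt_5)=\{1,3,4,5\}$, namely $\{1,3,4,5,9,12,15,16,20,25\}$, still have every pairwise gcd of at most two primes, which is the origin of the exceptional factor $\Alt_5\times\Alt_5$. By contrast $k\ge 3$ already fails for $\Alt_5$ (for instance $\gcd(4^3,4^2\cdot 3)=16$ has $\omega=4$), and $k=2$ fails for every larger $S$ (a repeated degree $d$ with $\omega(d)\ge 2$, or a pair of degrees sharing a composite factor, produces a forbidden gcd), so $\Alt_5\times\Alt_5$ is the unique non-simple possibility.

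For the simple case $k=1$ the condition reads: no two characters of $S$ that are forced to give distinct degrees in $G$ may have $\omega(\gcd(\theta_1(1),\theta_2(1)))\ge 3$. I would recast this as an arithmetic condition on $\cd(S)$ together with the action of $\Out(S)$ and the Schur multiplier — these control when two such characters can be \emph{merged} onto a single degree of $G$ and hence render a bad pair harmless — and then check it family by family. The family $\PSL_2(q)$ survives for all $q\ge 7$: its degrees $q-1,\,q,\,q+1$ are pairwise coprime up to the prime $2$, and the only genuinely bad pair, a half-degree $(q\pm1)/2$ together with its double $q\pm 1$, is collapsed by the diagonal automorphism, which fuses the two half-degree characters. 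The remaining listed groups $\Alt_6,\Alt_7,\PSL_3(3),\PSL_3(4),\PSL_3(5),\PSU_3(3),\PSU_3(4),\Sz(8),\Sz(32)$ and ${\rm M}_{11}$ are confirmed directly from their character tables, while every other simple group is excluded by exhibiting, from its generic degree formulas, two degrees with $\omega(\gcd)\ge 3$ whose induced degrees in $G$ cannot all be merged.

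The main obstacle is precisely this merging issue: because the two-prime hypothesis only constrains gcds of \emph{distinct} degrees, a bad pair of $S$-degrees is harmless if Clifford theory forces the corresponding characters of $G$ onto a common degree (as happens for $\PSL_2(q)$). The heart of the proof is therefore to show, for each excluded family, that there are enough bad pairs — or bad repeated degrees lying in distinct $\Out(S)$-orbits — that they cannot be collapsed simultaneously, so some genuinely distinct pair of degrees of $G$ violates the hypothesis. Technically the most demanding part is obtaining this conclusion uniformly for the Lie-type groups of unbounded rank and field size, where one needs generic coprimality and prime-multiplicity estimates for the degree polynomials rather than case-by-case look-ups in the tables.
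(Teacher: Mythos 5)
This statement is not proved in the paper at all: Theorem~\ref{chief} is imported verbatim from \cite{LL} (``the results are encoded in the following theorem''), so there is no internal proof to compare your attempt against, and I can only judge your proposal on its own terms. The general shape is right and certainly matches what a proof must do: your Clifford-theoretic transfer is correct (every $\chi\in\Irr(G)$ lying over the inflation of $\theta^{\times k}$ has degree divisible by $\theta(1)^k$, since $\chi_M$ is a multiple of a sum of $G$-conjugates of $\theta^{\times k}$, all of that degree), and the reduction to an arithmetic condition on $\cd(S)$ plus a CFSG case analysis is the standard route. But as written this is a plan, not a proof. The sentence ``every other simple group is excluded by exhibiting \dots two degrees with $\omega(\gcd)\ge 3$ whose induced degrees in $G$ cannot all be merged'' is a restatement of the theorem rather than an argument: you correctly identify the merging problem as the heart of the matter and then do not solve it. Resolving it requires, for each family, exhibiting characters of $S$ that extend to $\Aut(S)$ (so that $\theta(1)^k$ is an actual degree of $G/N$, pinning down two genuinely distinct degrees of $G$), or else controlling inertia groups and ramification; none of that machinery appears, and it is precisely where the length and difficulty of \cite{LL} lie.

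There are also concrete soft spots in the parts you do argue. In case (ii), the products $\theta_1(1)\cdots\theta_k(1)$ are degrees of $M/N$, not of $G$, so the merging issue you flag in case (i) applies equally here and is not addressed; moreover the parenthetical criterion ``a repeated degree $d$ with $\omega(d)\ge 2$ \dots produces a forbidden gcd'' fails as stated, since $\gcd(d^2,\,d\cdot d')=d\gcd(d,d')$ has three primes only when $\omega(d)\ge 3$ or $d$ and $d'$ share a factor --- for groups such as $\PSL_2(2^f)$, whose nontrivial degrees are pairwise coprime, a separate argument is needed. Finally, your explanation of why $\PSL_2(q)$ ``survives for all $q\ge 7$'' is both unnecessary and inaccurate: part (i) only requires excluding groups not on the list (the list is a necessary condition, not a characterization), and $G$ need not induce the diagonal automorphism, so the two half-degree characters need not fuse. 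Indeed for $q=211$ one has $(q-1)/2=105=3\cdot5\cdot7$ and $q-1=210$ both in $\cd(\PSL_2(q))$, so $\PSL_2(211)$ itself violates the two-prime hypothesis; this is consistent with the theorem only because the theorem does not assert that every listed group actually occurs.
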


Let $G$ be a finite nonsolvable group satisfying the two-prime hypothesis.  Using the notation in Theorem {\rm \ref{chief}} and its proof in \cite{LL}, we find a normal subgroup $N$ so that $(G/N)/(CN/N)$ embeds into $\Aut( {M/N})$ where $C/N = \Centralizer_{G/N} (M/N)$, which is almost simple except when $M/N \cong \Alt_5 \times \Alt_5$.  Consequently,  we can conduct a case-by-case investigation on the possible nonabelian chief factors $M/N$ of $G$ in order to obtain a general upper bound for $|\cd(G)|$.  In the present paper, we get an upper bound for  $|\cd(G)|$ of nonsolvable groups $G$ with  nonabelian chief or composition factors isomorphic to $\PSL_2(q)$ for any prime power $q\ge 7$.

In this section, we consider almost simple groups with  socle isomorphic to $S \cong \PSL_2 (q)$ for a prime power $q \geq 7$.  Let $q = p^f$, where $p$ is a prime and $f$ is a positive integer.  The outer automorphism group of $S$ is generated by a field automorphism $\varphi$ of order $f$, and if $p$ is odd, a diagonal automorphism $\overline{\delta}$ of order $2$. For any groups with socle $\PSL_2(q)$, the character degrees have been explicitly computed by D. White.

\begin{lemma} \emph{(\cite[Theorem A]{White}).} \label{lem:White}
Let $S = \PSL_2 (q)$, where $q = p^f > 3$ for a prime $p$ and a positive integer $f$, let $A = \Aut(S)$, and let $S \leq H \leq A$.  Set $\Gamma = {\rm PGL}_2 (q)$ if $p$ is odd and $\overline {\delta} \in H$ and $\Gamma = S$ if either $p = 2$ or $\overline {\delta} \not\in H$, and set $d = |H:\Gamma| = 2^a m$, for the nonnegative integer $a$ and the odd integer $m$. If $p$ is odd, let $\epsilon = (-1)^{\frac {(q-1)}{2}}$.  Then the set of irreducible character degrees of $H$ is $$\cd(H) = \{ 1, q, (q+\epsilon)/2 \} \cup \{ (q-1) 2^a i : i \mid m \} \cup \{ (q+1) j : j \mid d \},$$ with the following exceptions:
\begin{itemize}
   \item[$(i)$]   If $p$ is odd with $H \nleqslant S \langle \varphi \rangle$ or if $p=2$, then $(q + \epsilon)/2$ is not a degree of $H$.
   \item[$(ii)$]  If $f$ is odd, $p = 3$, and $H = S \langle \varphi \rangle$, then $i \neq 1$.
   \item[$(iii)$] If $f$ is odd, $p = 3$, and $H = A$, then $j\neq 1$.
   \item[$(iv)$]  If $f$ is odd, $p \in \{ 2, 3, 5 \}$ and $H = S \langle \varphi \rangle$, then $j \neq 1$.
   \item[$(v)$]   If $f \equiv 2 ~({\rm mod ~}4)$, $p \in \{ 2, 3 \}$, and $H = S \langle \varphi \rangle$ or $H = S \langle \overline{\delta} \varphi \rangle$, then $j \neq 2$.
\end{itemize}
\end{lemma}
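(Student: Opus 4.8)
The plan is to reduce the computation of $\cd(H)$ to two ingredients: (a) the classical list of irreducible character degrees of the socle $S=\PSL_2(q)$, and (b) Clifford theory controlling how characters of $S$ behave under the cyclic-by-cyclic outer action of $H/S \leq \Out(S) = \langle \overline{\delta}\rangle \times \langle\varphi\rangle$, where the diagonal factor $\langle\overline{\delta}\rangle$ is present only when $p$ is odd. Since this is White's Theorem~A, the justification in the present paper is simply the citation; what follows is the shape of the argument one would give.

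First I would record $\Irr(S)$. Over $\bbC$, $\PSL_2(q)$ has the trivial character of degree $1$; the Steinberg character of degree $q$; a principal-series family of degree $q+1$ coming from nontrivial characters of a split torus of order $q-1$; a cuspidal family of degree $q-1$ coming from nontrivial characters of a nonsplit torus of order $q+1$; and, when $p$ is odd, two exceptional characters of degree $(q+\epsilon)/2$, with $\epsilon=(-1)^{(q-1)/2}$ recording which torus carries them. Next I would analyze the action of $A=\Aut(S)$ on $\Irr(S)$. The diagonal automorphism $\overline{\delta}$, which realizes $\Gamma=\PGL_2(q)=S\langle\overline{\delta}\rangle$, fuses these characters in pairs: it interchanges the two exceptional characters and glues the two halves of each torus-family, so that over $\Gamma$ one recovers the degrees $q-1$ and $q+1$ but loses $(q+\epsilon)/2$ — this is the source of exception $(i)$. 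The field automorphism $\varphi$ of order $f$ permutes characters inside each family according to the Frobenius orbit structure on the torus characters, and the divisors $i\mid m$ and $j\mid d$ index the lengths of these $\varphi$-orbits, consistent with the splitting $d=2^am$. For each $\chi\in\Irr(S)$ I would compute the inertia group $I=I_H(\chi)$, decide whether $\chi$ extends to $I$ (the obstruction lives in the Schur multiplier of $I/S$, which vanishes in almost every case because $I/S$ is cyclic), and then read off the constituent degrees of $\chi^H$ as $[H:I]\,\chi(1)$ via the Clifford correspondence and Gallagher's theorem; this yields the main families $(q-1)2^a i$ and $(q+1)j$.

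The hard part will be the exceptional cases $(ii)$–$(v)$. These isolate exactly the small characteristics $p\in\{2,3,5\}$ together with congruence conditions on $f$ for which a character that one naively expects to be stable-and-extendible, or to split, instead fails to do so — owing either to a coincidence in the Frobenius action on the torus characters or to a genuine cohomological obstruction for the specific subgroups $H=S\langle\varphi\rangle$, $H=S\langle\overline{\delta}\varphi\rangle$, or $H=A$. Establishing each exclusion ($i\neq1$, $j\neq1$, $j\neq2$) requires explicitly locating the fixed points of $\varphi$ (or of $\overline{\delta}\varphi$) on the two torus-character families and then checking, in those finitely many small configurations, whether the stabilized characters actually produce the borderline degrees $q\mp1$ or $2(q+1)$. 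This bookkeeping — tracking precisely which characters are $\varphi$-stable and whether each extends — is the delicate part of the computation, and it is the content that White verifies case by case.
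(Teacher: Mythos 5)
The paper offers no proof of this lemma at all: it is imported verbatim as Theorem~A of the cited paper of White, so the ``proof'' here is the citation. Your sketch is a faithful description of how that theorem is actually established: one starts from the known degrees of $\Irr(\PSL_2(q))$ (the trivial character, the Steinberg character of degree $q$, the principal series of degree $q+1$, the discrete series of degree $q-1$, and for odd $p$ the two characters of degree $(q+\epsilon)/2$), determines the action of $\Outdiag$ and the field automorphisms on these families, and then reads off $\cd(H)$ from inertia groups via Clifford theory and Gallagher's theorem, with the fusion of the two degree-$(q+\epsilon)/2$ characters under $\overline{\delta}$ accounting for exception $(i)$ and the $\varphi$-orbit lengths on torus characters producing the divisors $i\mid m$ and $j\mid d$. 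The one caveat is that the genuinely load-bearing content --- verifying exceptions $(ii)$--$(v)$, i.e.\ locating the $\varphi$- and $\overline{\delta}\varphi$-fixed torus characters in characteristics $2$, $3$, $5$ and checking extendibility in the non-cyclic cases of $I/S$ --- is described but not carried out; since the lemma is quoted from the literature rather than proved in this paper, that is an acceptable place to stop, but it should be understood that your text is an outline of White's argument rather than an independent proof.
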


We mention that when $p = 3$, the conditions of exceptions $(ii)$ and $(iv)$ are the same.  Therefore, if this is the case, then we have both $i \neq 1$ and $j \neq 1$.

\begin{theorem} \label{AlmostL2(q)}
Let $S < G \leq \Aut(S)$, where $S = \PSL_2 (q)$ for $q = p^f \geq 7$. If $G$ satisfies the two-prime hypothesis, then $G$ is one of the groups in Table \ref{tab:L2}.

\begin{table}
{\small
\begin{center}
\caption{Possible group, character degrees, and necessary conditions}\label{tab:L2}
 \vspace{-2ex}
\begin{tabular}{lll}
  \hline

 $G$ & $\cd(G)\setminus \{1\}$ & {\rm Conditions} \\ \hline
 $\Sym_6$ & $\{5, 9, 10, 16\}$      & \\
 ${\rm{M}}_{10}$ & $\{ 9, 10 ,16\}$      &  \\
 $\PGL_2(q)$ & $\{q-1, q, q+1\}$    &   \\
 $ S \langle \varphi \rangle$ & $\{(3^f-1)/2, 3^f, (3^f\pm1)f\}$ &  $\omega((3^f-1)/2)\leq 2$ {\rm and}\\
  & &{\rm $f$  is an odd prime} \\
 $ \Aut(S)$  &  $\{ 3^f-1, 3^f, (3^f\pm1)f \}$ & $\omega(3^f-1) = 2$ {\rm and}  \\
 & & $f$ {\rm is an odd prime} \\
 $ S \langle \varphi \rangle$ & $\{ 2^f-1, 2^f, (2^f-1)f, (2^f+1)f \}$ & $\omega(2^f-1)\leq 2$ {\rm and} \\
 &                             &    $f$ {\rm is an odd prime}\\
$S\langle \varphi^{\frac{f}{4}}\rangle$  & $\{2^f, 2^2(2^f\pm1), 2^f+1, 2(2^f+1)\}$ & $2^f+1 > 5$ {\rm is a Fermat prime}   \\
$S\langle \varphi^{\frac{f}{2}}\rangle$  & $\{2^f,  2(2^f-1), 2^f+1, 2(2^f+1)\}$ &  $\omega(2^f+1)\leq 2$ {\rm and} $f$ {\rm is even}  \\
$S\langle \varphi^{\frac{f}{2}}\rangle$  & $\{ \frac{q+1}{2}, q,  2(q-1), q+1, 2(q+1)\}$  &  $\omega(q+1)\leq 2$, \\
&                          & {\rm $f$ is even, and  $q$ is odd} \\
$S\langle \overline{\delta}\varphi^{\frac{f}{2}}\rangle$  & $\{ q,  2(q-1), q+1, 2(q+1)\}$  &  $\omega(q+1)\leq 2$, \\
&                          & {\rm $f$ is even, and  $q$ is odd} \\

$\PGL_2(q)\langle\varphi^{\frac{f}{2}}\rangle$  & $\{q, 2(q-1), q+1, 2(q+1)\}$ &  $\omega(q+1)\leq 2$, \\
&                          & {\rm  $f$ is even, and $q$ is odd}\\
 $S\langle\varphi^{\frac{f}{m}}\rangle$ & $\{2^f-1, 2^f, m(2^f\pm1), 2^f+1\}$ & $\omega(2^f-1)\leq 2$,$\omega(2^f+1)\leq 2$ {\rm and}\\
 &   & {\rm $m<f$ is an odd prime dividing} $f$   \\
\hline
\end{tabular}
\end{center}
}
\end{table}
\end{theorem}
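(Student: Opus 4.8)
The plan is to run through every intermediate group $S < G \le A := \Aut(S)$, organised by the image of $G$ in $\Out(S)$, and in each case read off $\cd(G)$ from White's formula (Lemma~\ref{lem:White}) and decide exactly when the two-prime hypothesis can hold. Recall that $\Out(S)$ is generated by $\overline{\delta}$ and $\varphi$, so that $\Out(S)\cong\bbZ_2\times\bbZ_f$ when $p$ is odd and $\Out(S)\cong\bbZ_f$ when $p=2$; thus $G$ is determined by a subgroup of $\Out(S)$, and from that subgroup I can record whether $\Gamma=\PGL_2(q)$ or $\Gamma=S$ together with the invariants $d=|G:\Gamma|=2^a m$ feeding into Lemma~\ref{lem:White}.

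The engine of the argument is a pair of local observations. Any two distinct members of the family $\{(q-1)2^a i : i\mid m\}$ have greatest common divisor divisible by $(q-1)2^a$, and any two members of $\{(q+1)j : j\mid d\}$ have greatest common divisor divisible by $q+1$. Hence, as soon as one of these families has at least two surviving degrees, the two-prime hypothesis forces $\omega((q-1)2^a)\le 2$, respectively $\omega(q+1)\le 2$; and if the relevant parameter ($m$ or $d$) is not a prime times a power of $2$, one can choose two members whose common divisor absorbs an extra prime, which sharpens the bound and ultimately forces the field-degree to be prime. The remaining constraints come from the cross greatest common divisors: for $q$ odd one has $\gcd(q-1,q+1)=2$ and for $q$ even $\gcd(q-1,q+1)=1$, so a pair such as $(q-1)c$ and $(q+1)c$ has greatest common divisor $c\cdot\gcd(q-1,q+1)$, which pins down the multiplier $c$; meanwhile $q$ is coprime to $q\pm1$, while $(q+\epsilon)/2$ divides $q+\epsilon$, so pairing it against the corresponding family yields the conditions $\omega((q\pm1)/2)\le 2$. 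Translating all of these into inequalities on $\omega(q-1)$, $\omega(q+1)$, $\omega((q\pm1)/2)$, on $a$, $m$, $d$, and on $f$ yields precisely the entries and side conditions of Table~\ref{tab:L2}.

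Concretely, I would first dispatch the groups containing $\overline{\delta}$ but no genuine field automorphism: here $\Gamma=\PGL_2(q)$, exception~(i) deletes $(q+\epsilon)/2$, and for $G=\PGL_2(q)$ the set $\{q-1,q,q+1\}$ has all pairwise greatest common divisors dividing $2$, so the hypothesis holds with no further condition. I would then treat the field-automorphism groups $G=S\langle\varphi^{f/m}\rangle$, their $\overline{\delta}$-twists, and $\PGL_2(q)\langle\varphi^{f/2}\rangle$, where the multiples of $q\pm1$ by the field degree appear; it is exactly these pairs that generate the conditions $\omega(q-1)\le 2$, $\omega(q+1)\le 2$, $\omega((q\pm1)/2)\le 2$, the Fermat-prime condition on $q+1$, and the primality of the field degree listed in the table. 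Completeness --- that nothing outside the table survives --- is the reverse direction: whenever $f$ is composite beyond the allowed powers of $\varphi$, or $d$ has too many divisors, or $\omega(q\pm1)\ge 3$, I must exhibit an explicit pair in White's list whose greatest common divisor has at least three prime factors, which the two local observations supply.

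The main obstacle is the small-characteristic and small-$q$ bookkeeping controlled by exceptions~(i)--(v) of Lemma~\ref{lem:White}. These deletions occur precisely when $p\in\{2,3,5\}$ and $f$ is small, and they can remove exactly the degree that would otherwise have caused a violation, so in those ranges the crude local bounds no longer decide the issue and each surviving degree set must be inspected by hand. This is what forces the separate rows for $p=2$ and $p=3$ and, at $q=9$ where $S\cong\Alt_6$ has the exceptional $\Out(S)\cong\bbZ_2\times\bbZ_2$, produces the isolated cases $G\in\{\Sym_6,{\rm M}_{10}\}$; for instance, obtaining $\cd(\Sym_6)\setminus\{1\}=\{5,9,10,16\}$ and $\cd({\rm M}_{10})\setminus\{1\}=\{9,10,16\}$ requires invoking both the deletion~(i) and the deletion~(v) for $f\equiv 2\pmod 4$. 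Verifying that every such exceptional configuration has been accounted for, and that the conditions in Table~\ref{tab:L2} are simultaneously necessary and sufficient, is the most delicate part of the argument.
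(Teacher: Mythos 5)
Your overall strategy --- enumerate the intermediate groups $S<G\le\Aut(S)$ via White's description of $\cd(G)$ in Lemma \ref{lem:White} and test the two-prime hypothesis on greatest common divisors of pairs of degrees within and across the families $\{(q-1)2^a i\}$, $\{(q+1)j\}$, $\{q,(q+\epsilon)/2\}$ --- is exactly the paper's strategy, and most of your local observations (the cross-gcd $f\cdot\gcd(3^f-1,3^f+1)=2f$ forcing $f$ prime when $p=3$, the case split on $a$ and $m$, the hand inspection of the deletions at $p\in\{2,3,5\}$) reproduce the paper's case analysis.

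There is, however, one genuine missing ingredient. In the row $G=S\langle\varphi\rangle$ with $q=2^f$ and $f$ odd (exception $(iv)$), your two local observations yield only $\omega(2^f-1)\le 2$ (from the pair $2^f-1$ and $(2^f-1)f$) and $\omega(f)\le 2$ (from $\gcd((2^f-1)f,(2^f+1)f)=f$, since $\gcd(2^f-1,2^f+1)=1$); they do not force $f$ to be prime. Your proposed mechanism --- choose divisors $1<i_1<i_2$ of $m=f$ with $i_1\mid i_2$ so that $(2^f-1)i_1$ divides the gcd of two listed degrees --- produces a violation only if one already knows $\omega(2^f-1)\ge 2$, and nothing in the plan rules out $2^f-1$ having a single prime divisor. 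The paper closes precisely this gap with the Zsigmondy prime theorem: if $f$ has two distinct prime factors or is divisible by the cube of a prime, then $2^f-1$ has three distinct prime divisors outright, and if $f=r^2$ then $2^f-1$ has exactly two, whence the pair $(2^f-1)r$, $(2^f-1)r^2$ kills the hypothesis. Without this (or an equivalent elementary argument that $2^f-1$ is neither prime nor a prime power when $f$ is composite), the primality of $f$ in that row of Table \ref{tab:L2} is not established. Relatedly, the eliminations of exception $(iv)$ for $p=5$ and exception $(v)$ for $p\in\{2,3\}$ with $f>2$ rest on specific congruences ($4\mid 5^f-1$; $8\mid 3^f-1$ and $3\mid 2^f-1$ for $f$ even) that manufacture the third prime; your generic gcd framework must have these supplied explicitly rather than ``absorbed.'' A small factual slip: $\cd(\Sym_6)$ needs only deletion $(v)$, not $(i)$, since $\Sym_6=S\langle\varphi\rangle\le S\langle\varphi\rangle$, which is why $5=(q+\epsilon)/2$ survives there but not in ${\rm M}_{10}$.
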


\begin{proof}
We use notation in Lemma \ref{lem:White}.  Suppose $G = {\PGL}_2 (q)$.  Since $\cd(\PGL_2 (q)) = \{ 1, q-1, q, q+1\}$, it is easy to see that $G$ satisfies the two-prime hypothesis.  So we may assume that $G \not\cong {\PGL}_2(q)$.  Then we have $a \geq 1$ when $m = 1$.  We will first deal with the exceptions in Lemma \ref{lem:White}, and then we will deal with the general case.

We consider the exceptions $(ii)$, $(iii)$, $(iv)$, and $(v)$ of Lemma \ref{lem:White} separately.   In these exceptions, we have $p \in \{ 2, 3, 5 \}$.  So we may consider the possibilities for $p$.  We will deal with exception $(i)$ as part of the general case.

(\textbf{1}) $p = 5$.  In this case, we only need to consider exception $(iv)$ of Lemma \ref{lem:White}.

We have $G = S \langle \varphi \rangle$, $m = f>1$ is odd, and $5^f - 1, (5^f - 1)f \in \cd(G)$.  Notice that $5^f$ is congruent to $1$ modulo $4$, so $4$ divides $5^f - 1$.  Since $f > 1$, we have $4 < 5^f - 1$ and this implies $5^f-1$ is divisible by at least three primes.  Thus, $G$ does not satisfy the two-prime hypothesis, so this case does not occur.

(\textbf{2}) $p = 3$. If exception $(ii)$ occurs in Lemma \ref{lem:White}, then exception $(iv)$ also occurs.  We have $G = S \langle \varphi \rangle$, $m = f > 1$ is odd, and $\cd(G) = \{ 1, (3^f-1)/2, 3^f, (3^f-1)f, (3^f+1)f \}$.  Since $G$ satisfies the two-prime hypothesis and $2f$ divides both $(3^f - 1) f$ and $(3^f + 1)f$, it follows that $f$ is an odd prime.  Also, we must have $(3^f - 1)/2$ is divisible by at most two primes, as listed in Table \ref{tab:L2}.

Suppose that exception $(iii)$ occurs.  Then $G= \Aut(S)$ and $m = f$ is odd.  We observe that both $3^f-1$ and $3^f+1$ are divisible by at least two primes.   Since $G$ satisfies the two-prime hypothesis, $f$ must be an odd prime and $\cd(G)=\{1, (3^f-1), 3^f, (3^f-1)f, (3^f+1)f \}$ by Lemma \ref{lem:White}.  In addition, we see that $3^f-1$ will be divisible by exactly two primes, as listed in Table \ref{tab:L2}.

We now suppose that exception $(v)$ occurs.  Then $f \equiv 2 ~({\rm mod~} 4)$, and $G= S \langle \varphi \rangle$ or $G = S \langle \overline{\delta} \varphi\rangle$.  If $f = 2$, then we have $G = \Sym_6$ or $H = \textrm{M}_{10}$, which are contained in Table \ref{tab:L2}.  If $f > 2$, then $2 (3^f-1), (3^f-1) f \in \cd(G)$.  Observe that $3^f$ is congruent to $1$ modulo $8$ since $f$ is even, and so, $8$ divides $3^f - 1$.  This implies $G$ does not satisfy the two-prime hypothesis and this case cannot occur.

(\textbf{3}) $p = 2$.  We need to consider exceptions $(iv)$ and $(v)$ in Lemma \ref{lem:White}.  We also know the exception $(i)$ of Lemma \ref{lem:White} occurs, and so, we have that $\frac {2^f + \epsilon}{2}$ is not a degree of $G$.  Suppose that exception $(iv)$ occurs.  Then $G = S \langle \varphi \rangle$,  $m = f$ is odd, and $2^f - 1, (2^f - 1) f \in \cd(G)$.  Since $G$ satisfies the two-prime hypothesis, we have $\omega (2^f - 1) \leq 2$.  We claim that this implies that $f$ is a prime or the square of a prime.  To do this, we apply the Zsigmondy prime theorem which can be found as Theorem IX.8.3 in \cite{HuppertB}.  Suppose that $f$ is divisible by two distinct primes $p_1$ and $p_2$, then since $p_1 p_2$ is odd, we get Zsigmondy prime divisors for each of $2^{p_1} - 1$, $2^{p_2} - 1$, and $2^{p_1 p_2} - 1$, and this gives three distinct prime divisors of $2^f - 1$, a contradiction.  Thus, we have that $f$ is a power of some prime $r$.  If $r^3$ divides $f$, then we get Zsigmondy prime divisors for each of $2^r - 1$, $2^{r^2} - 1$, and $2^{r^3} - 1$, and again this gives three distinct prime divisors of $2^f - 1$.  It follows that $f$ is either $r$ or $r^2$, as claimed.
Assume $f = r^2$.  This implies that $2^f-1$ is not a Mersenne prime, and so $2^f - 1$ has exactly two prime divisors.
Also, we have $(2^f - 1)r, (2^f - 1)r^2 \in \cd(G)$, contradicting the two-prime hypothesis. Therefore, this cannot occur, and thus, $f$ is an odd prime
and $\cd(G)=\{ 1,  2^f-1, 2^f, (2^f-1)f, (2^f+1)f \}$, where $2^f - 1$ is divisible by at most two primes, as listed in Table \ref{tab:L2}.

Now, suppose that exception $(v)$ occurs.  We have $f > 2$. Then we have $m > 1$ and $2(2^f-1), (2^f-1)f \in \cd(G)$.  Since $f$ is congruent to $2$ modulo $4$, $f$ is even and $2^f$ is congruent to $1$ modulo $3$.  This implies that $3$ divides $2^f - 1$.  Since $f > 2$, we know that $3 < 2^f - 1$ and $2^f - 1$ is not prime.  Since $f$ is even, we see that $\gcd (2 (2^f-1),f(2^f-1)) = 2(2^f-1)$ which is divisible by at least three primes, so $G$ does not satisfy the two-prime hypothesis.  Therefore, this case cannot occur.

We now consider the general case of Lemma \ref{lem:White}.

(\textbf{4}) $a \geq 2$.
By Lemma \ref{lem:White}, we have $(q - 1) 2^a, (q + 1) 2^a \in \cd(G)$ if either $a \geq 3$ or $a = 2$, $m = 1$ and $q$ is odd, and that $(q-1) 2^a, (q-1) 2^a m \in \cd(G)$ if $a = 2$ and $m > 1$.  In all of these cases, the two-prime hypothesis is violated in $G$, so they do not occur.  Suppose that $a=2$, $m=1$, and $q$ is even.  Then we have $\cd(G) = \{ 1, 2^f, 2^2 (2^f-1), (2^f+1), 2(2^f+1), 2^2(2^f+1) \}$ and $G = S \langle \varphi^{\frac{f}{4}} \rangle$.  Since $G$ satisfies the two-prime hypothesis, $2^f+1$ must be a (Fermat) prime, as listed in Table \ref{tab:L2}.

(\textbf{5}) $a = 1$. In particular, $f$ is even.
If $m > 1$, then $\cd(G) \supseteq \{2(q-1),  2m(q-1), 2(q+1), 2m(q+1)\}$.  Since $q > 5$, it follows that $q - 1$ and $q + 1$ cannot simultaneously be a Mersenne prime and a Fermat prime. Thus, either $2(q-1)$ or $2(q+1)$ is divisible by at least three primes, and this violates the two-prime hypothesis, so this case cannot occur.  We now assume $m = 1$. If  $q$ is even then  $G= S \langle \varphi^{\frac{f}{2}}\rangle $ with  $\cd(G)=\{1, 2^f, 2(2^f-1), 2^f+1, 2(2^f+1) \}$;
if $q$ is odd and $G\leq S \langle \varphi\rangle $
then  $G= S \langle \varphi^{\frac{f}{2}}\rangle $ with  $\cd(G)=\{1, \frac{q+1}{2}, q, 2(q-1), q+1, 2(q+1) \}$; and if
$q$ is odd and $G\not\leq S \langle \varphi\rangle $ then $G= {\PGL}_2(q)\langle \varphi^{\frac{f}{2}} \rangle$
with $\cd(G)=\{1, q, 2(q-1), q+1, 2(q+1) \}$ or $G=\PSL_2(q)\langle\bar{\delta}\varphi^{f/2}\rangle$ with $\cd(G)=\{1,q,q+ 1,2(q\pm 1)\}$.
Since $G$ satisfies the two-prime hypothesis, it follows that $q+1$ is divisible by at most two primes for every case, as listed in Table \ref{tab:L2}.

(\textbf{6}) $a = 0$ so that $m > 1$.  If $q$ is odd, then as we have seen either $q - 1$ or $q + 1$ is divisible by at least three primes since $q > 5$. By Lemma \ref{lem:White}, we have $\{ q - 1, (q - 1) m, q + 1, (q + 1) m \} \subset \cd(G)$.  This implies that $G$ does not satisfy the two-prime hypothesis, so this case does not occur.  Suppose that $q$ is even. If $m$ is composite with a proper divisor $m_1$, then $\{ (2^f - 1) m_1, (2^f - 1) m, (2^f + 1) m_1, (2^f + 1) m \}\subset \cd(G)$ by  Lemma \ref{lem:White}.  Observe that $2^f$ is congruent to $1$ modulo $3$ when $f$ is even and $-1$ when $f$ is odd.  Since $2^f \ge 8$, we have $2^f - 1$ is divisible by at least two primes if $f$ is even and $2^f + 1$ is divisible by at least two primes if $f$ is odd.  It follows that  $G$ violates the two-prime hypothesis, so this case cannot occur.  We may assume $m$ is an odd prime. Then $G = S \langle \varphi^{\frac{f}{m}} \rangle$ and $\cd(G) = \{ 1, 2^f,  2^f-1, (2^f-1)m, 2^f+1, (2^f+1)m \}$.  Since $G$ satisfies the two-prime hypothesis, both $2^f - 1$ and $2^f + 1$ are divisible by at most two primes, as listed in Table \ref{tab:L2}.

Now the proof is complete.
\end{proof}

We will need the maximal subgroups of $\PSL_2 (q)$ for $q \geq 7$. According to Dickson's list of subgroups of $\PSL_2(q)$ (cf. \cite{Huppert}, Hauptsatz II. $8.27$), we have a list of the maximal subgroups of  $\PSL_2(q)$.  We first list the maximal subgroups when the characteristic is even.

\begin{theorem} {\rm (\cite[Theorem $2.1$]{Giudici}).}\label{maximaleven}
 If $q= 2^f \geq 8$, then the maximal subgroups of $\PSL_2(q)$ are$:$

\begin{enumerate}
   \item $C_2^f \rtimes C_{q-1}$, that is, the stabilizer of a point of the projective line,
  \item ${\rm{D}}_{2(q-1)},$
  \item ${\rm{D}}_{2(q+1)},$
  \item ${\rm PGL}(2, q_0)$, where $q = q_0^r$ for some prime $r$ and $q_0 \neq 2$.
\end{enumerate}
\end{theorem}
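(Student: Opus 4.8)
The plan is to derive the list from Dickson's complete classification of the subgroups of $\PSL_2(q)$ (Huppert, Hauptsatz II.8.27, already cited above) by specializing to $q=2^f$ and then sifting out the maximal ones. For $q$ even one has $\gcd(q-1,2)=1$, so $\PSL_2(q)=\PGL_2(q)$, and a Sylow $2$-subgroup is elementary abelian of order $q$, namely the additive group $(\fq,+)$ on which the split torus $C_{q-1}$ acts by scalar multiplication (in characteristic $2$ the map $t\mapsto t^2$ is onto $\fq^*$, so the torus realizes \emph{every} scalar multiplication). Dickson's list then consists of: subgroups of the Borel $C_2^m\rtimes C_t$ (including the elementary abelian $2$-groups and the split cyclic groups), cyclic groups $C_z$ with $z\mid q\pm1$, dihedral groups $\mathrm{D}_{2z}$ with $z\mid q\pm1$, the exceptional groups $\Alt_4,\Sym_4,\Alt_5$, and the subfield subgroups $\PSL_2(2^m)$ for $m\mid f$ together with $\PGL_2(2^m)$.

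First I would single out the three generic maximal families. The point stabilizer $C_2^f\rtimes C_{q-1}$ is the full Borel and is maximal because $\PSL_2(q)=\PGL_2(q)$ acts $2$-transitively (in fact sharply $3$-transitively), hence primitively, on the projective line $\bbP^1(\fq)$ of $q+1$ points, so a one-point stabilizer is maximal. The normalizers of the split and non-split maximal tori give $\mathrm{D}_{2(q-1)}$ and $\mathrm{D}_{2(q+1)}$; their maximality is checked by an order argument, showing that once $q\ge8$ any proper overgroup forced from Dickson's list is impossible. Conversely the remaining entries of Dickson's list are non-maximal: every proper subgroup of the Borel lies in the Borel, each cyclic $C_{q\pm1}$ lies in the corresponding dihedral group, and the full Sylow $2$-subgroup lies in the Borel.

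Next I would dispose of the exceptional subgroups. The group $\Sym_4$ cannot occur at all, since its Sylow $2$-subgroup is dihedral of order $8$ and hence cannot embed into the elementary abelian Sylow $2$-subgroup of $\PSL_2(2^f)$. The group $\Alt_4=V_4\rtimes C_3$ occurs only when $\bbF_4\subseteq\fq$ (that is, $f$ even); in that case the element of order $3$ can be taken in the split torus, where multiplication by a generator of $\bbF_4^\ast$ permutes the three nonzero vectors of an $\bbF_4$-line cyclically, so $\Alt_4$ lies in the Borel and is not maximal. Finally $\Alt_5\cong\PSL_2(4)=\PGL_2(4)$ occurs only when $f$ is even, and whenever it is maximal it is precisely a subfield subgroup $\PGL_2(q_0)$ with $q_0=4$ (when $q=4^r$ with $r$ prime); otherwise it sits inside a larger subfield subgroup. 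Thus it contributes nothing beyond family (iv).

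It then remains to pin down the maximal subfield subgroups: $\PSL_2(2^m)$ is maximal exactly when $f/m$ is prime, and since $q_0=2^m$ is even one has $\PSL_2(2^m)=\PGL_2(2^m)$, so these are the groups $\PGL(2,q_0)$ with $q=q_0^r$ for a prime $r$; the restriction $q_0\ne2$ merely removes $\PSL_2(2)\cong\Sym_3$, which is dihedral and lies inside $\mathrm{D}_{2(q-1)}$ or $\mathrm{D}_{2(q+1)}$. The main obstacle is not the bookkeeping of which subgroups exist but the maximality and non-containment verifications between the four surviving families—confirming, for instance, that a torus normalizer or a subfield subgroup is not properly contained in a Borel or in one another. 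These are handled by comparing orders and invoking the simplicity of $\PSL_2(q)$ for $q\ge8$, which is exactly the delicate point that forces the hypothesis $q\ge8$.
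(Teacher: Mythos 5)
The paper gives no proof of this statement---it is quoted directly from Giudici's Theorem 2.1, with a pointer to Dickson's subgroup classification (Huppert, Hauptsatz II.8.27)---and your derivation by sifting Dickson's list is exactly the route that source takes; your argument is correct. The only points worth tightening are the claim that \emph{every} copy of $\Alt_4$ (not just the one you exhibit) lies in a Borel, which follows because the elementary abelian Sylow $2$-subgroups are TI so that $N(V_4)$ normalizes the unique Sylow $2$-subgroup containing $V_4$, and the deferred order/non-containment checks for the torus normalizers and subfield subgroups, all of which are routine.
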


Here is the list of maximal subgroups when the characteristic is odd (we shall directly refer to the Atlas \cite{Atlas} when $q=7,9,11$.)

\begin{theorem} {\rm (\cite[Theorem $2.2$]{Giudici}).}\label{maximalodd}
If $q=p^f\geq 13$, where $p$ is an odd prime, then the maximal subgroups of $\PSL_2(q)$ are$:$

\begin{enumerate}[$(1)$]
  \item $C_p^f\rtimes C_{(q-1)/2}$, that is, the stabilizer of a point of the projective line,
  \item ${\rm D}_{q\pm1}$,
  \item ${\rm PGL}(2, q_0)$, for $q = q_0^2$,
  \item $\PSL_2( q_0)$, for $q = q_0^r$, where $r$ is  an odd prime,

  \item ${\rm A}_5$, for $q\equiv \pm 1$ $({\rm mod}\ 10)$, with either $q $ prime, or $q = p^2$ and $p\equiv \pm 3$ $({\rm mod}\ 10)$,

  \item ${\rm A}_4$, for $q = p \equiv \pm 3$ $({\rm mod}\ 8)$; and $q \not\equiv \pm 1$ $({\rm mod}\ 10)$,

  \item ${\rm S}_4$, for $q \equiv \pm 1$   $({\rm mod}\ 8)$ with either $q$ prime, or $q=p^2$ and $3<p\equiv \pm 3$ $({\rm mod}\ 8)$.
\end{enumerate}
\end{theorem}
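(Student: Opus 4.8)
The plan is to obtain the list of maximal subgroups from Dickson's complete classification of \emph{all} subgroups of $\PSL_2(q)$ and then isolate the maximal ones by an order/containment sieve. So the first step is to prove the full subgroup theorem: every subgroup $H\le\PSL_2(q)$ is, up to conjugacy, one of (a) a subgroup of the point stabilizer $C_p^f\rtimes C_{(q-1)/2}$, i.e.\ a group with normal Sylow $p$-subgroup; (b) cyclic of order dividing $(q\pm 1)/2$; (c) dihedral of order dividing $q\pm 1$; (d) a subfield subgroup $\PSL_2(q_0)$ or $\PGL_2(q_0)$ with $\fq$ an extension of $\bbF_{q_0}$; or (e) one of the three exceptional groups $\Alt_4$, $\Sym_4$, $\Alt_5$. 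The natural setting is the $2$-transitive action of $\PSL_2(q)$ on the projective line $\bbP^1(\fq)$, which has $q+1$ points, together with the trichotomy of nonidentity elements into unipotent (order $p$, fixing one point of $\bbP^1(\fq)$), split semisimple (fixing two points, lying in a cyclic split torus of order $(q-1)/2$), and nonsplit semisimple (fixing no point of $\bbP^1(\fq)$, lying in a cyclic nonsplit torus of order $(q+1)/2$).

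The key dichotomy is whether $p$ divides $|H|$. If $p\mid|H|$, I would show that either $H$ has a nontrivial normal Sylow $p$-subgroup—forcing $H$ into a point stabilizer and hence giving case (a)—or a Galois-descent argument identifies $H$ with a subfield subgroup, giving case (d); here one uses that the unipotent elements together with a split torus generate a group defined over the subfield they pin down, which yields $\PSL_2(q_0)$ or $\PGL_2(q_0)$. If $p\nmid|H|$, then every element of $H$ is semisimple, so $H$ acts on $\bbP^1(\fq)$ with all point stabilizers cyclic; reducing modulo the center and comparing with the classification of finite subgroups of $\PGL_2(\overline{\bbF}_q)$, one finds that $H$ must be cyclic, dihedral, or one of $\Alt_4,\Sym_4,\Alt_5$, giving cases (b), (c), (e). This comparison is where I would invoke the normalizers of the two tori: the normalizer of a split (resp.\ nonsplit) torus is dihedral of order $q-1$ (resp.\ $q+1$), and a $p'$-subgroup not contained in such a normalizer must be one of the three polyhedral groups.

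With the subgroup list in hand, extracting the maximal subgroups is order-theoretic. The point stabilizer, the two torus normalizers $\mathrm{D}_{q-1}$ and $\mathrm{D}_{q+1}$, and the maximal subfield subgroups ($\PGL_2(q_0)$ for $q=q_0^2$ and $\PSL_2(q_0)$ for $q=q_0^r$ with $r$ an odd prime) are maximal by primitivity of the associated actions, and non-maximal repetitions are ruled out by observing that a proper subfield tower $\bbF_{q_0}\subset\bbF_{q_1}\subset\fq$ nests the subfield subgroups. The delicate part is the exceptional subgroups: one must determine exactly when $\Alt_4$, $\Sym_4$, $\Alt_5$ embed and, among those, which are maximal rather than contained in a subfield subgroup or in a larger polyhedral group. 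This is governed by congruences: $\Alt_5\cong\PSL_2(5)$ embeds precisely when $5\mid q^2-1$, which for odd $q$ is $q\equiv\pm1\pmod{10}$, and it is maximal exactly when it is not swallowed by a subfield subgroup, forcing $q$ prime or $q=p^2$ with $p\equiv\pm3\pmod{10}$; the existence and maximality of $\Alt_4$ and $\Sym_4$ are controlled similarly by $q\bmod 8$, reflecting whether the relevant $2$-element square roots and the double cover already lie in $\PSL_2(q)$ versus only in $\PGL_2(q)$.

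The main obstacle, and the step I expect to consume most of the work, is exactly this arithmetic bookkeeping for $\Alt_4$, $\Sym_4$, $\Alt_5$: one must simultaneously track existence (whether the polyhedral group injects into $\PSL_2(q)$, which depends on the field of definition and on whether one lands in $\PSL$ or only in $\PGL$) and maximality (excluding containment in a subfield subgroup $\PSL_2(q_0)$ and in each other), and the clean congruence conditions modulo $8$ and modulo $10$ in the statement emerge only after this case analysis. Everything else reduces to bounded-index comparison once the full subgroup list is established; for the small cases $q=7,9,11$ excluded by the hypothesis $q\ge 13$ I would, as the statement indicates, simply read the answer off the Atlas.
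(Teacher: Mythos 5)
The paper does not actually prove this statement: it is imported verbatim from Giudici \cite[Theorem~2.2]{Giudici}, who in turn derives it from Dickson's classification of \emph{all} subgroups of $\PSL_2(q)$ (the paper points to Huppert, Hauptsatz~II.8.27, for that list). Your plan --- first establish Dickson's subgroup theorem via the dichotomy on whether $p$ divides $|H|$ (normal Sylow $p$-subgroup or subfield subgroup in the first case; cyclic, dihedral, or polyhedral in the $p'$ case via the torus normalizers), then sieve for maximality by order/containment comparisons and the congruence analysis for $\Alt_4$, $\Sym_4$, $\Alt_5$ --- is exactly the route those sources take, so your approach matches the cited proof; it is of course a sketch, with the heavy lifting (Dickson's theorem itself and the mod~$8$ / mod~$10$ bookkeeping) deferred but correctly identified as the main work. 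Two small points to watch if you carry it out: the embedding criterion for $\Alt_5$ is $5$ dividing $|\PSL_2(q)|=q(q^2-1)/2$ rather than $5\mid q^2-1$ (the case $5\mid q$ is absorbed into the subfield subgroups, as you implicitly assume), and the hypothesis $q\ge 13$ enters precisely in your ``order-theoretic'' sieve --- it is what guarantees that $\mathrm{D}_{q\pm 1}$ is too large to sit inside $\Alt_5$ or $\Sym_4$, which fails for $q=7,9,11$ and is why the paper reads those cases off the Atlas separately.
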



\section{$G/N\cong \PSL_2(q)$}

In this section, we prove the following theorem.

\begin{theorem}\label{th:L2q} Let  $G$ be a group satisfying the two-prime hypothesis.  If $G/N$ is an almost simple group with simple socle $M/N\cong\PSL_2(q)$ with $q\ge 7,$ then $|\cd(G)| \leq 19$.
\end{theorem}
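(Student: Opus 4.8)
The plan is to study $G$ through the normal chain $N \trianglelefteq M \trianglelefteq G$, where $M/N \cong \PSL_2(q)$ is the socle of the almost simple group $G/N$. Since $G/N$ is almost simple, $\Centralizer_{G/N}(M/N)=1$ and so $G/M$ embeds into $\Out(\PSL_2(q)) \cong C_{\gcd(2,q-1)} \times C_f$, which is abelian. Consequently every $\chi \in \Irr(G)$ with $M \leq \ker\chi$ is linear, so all nontrivial degrees already see $M$. First I would split the degree set by its behaviour on $N$, writing $\cd(G) = \cd(G/N) \cup D$ where $D = \{\chi(1) : \chi \in \Irr(G),\ N \not\subseteq \ker\chi\}$, and bound $|\cd(G/N)|$ and $|D \setminus \cd(G/N)|$ separately so that their sum does not exceed $19$.

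For the first piece I would note that $G/N$ inherits the two-prime hypothesis from $G$, because $\cd(G/N) \subseteq \cd(G)$. Hence $G/N$ is one of $\PSL_2(q)$, $\PGL_2(q)$, or one of the groups listed in Table \ref{tab:L2} by Theorem \ref{AlmostL2(q)}. In each case the explicit degree set (from Lemma \ref{lem:White} and the entries of Table \ref{tab:L2}) shows $|\cd(G/N)| \leq 6$, which disposes of the degrees trivial on $N$.

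The bulk of the work is bounding the degrees in $D$, and here I would break $D$ into two subsets via the inertia of $\theta$, giving the three subsets overall together with $\cd(G/N)$. For $1 \neq \theta \in \Irr(N)$ and $\chi \in \Irr(G\mid\theta)$, Clifford theory gives $\chi(1) = [G:I_G(\theta)]\,\psi(1)$ with $\psi \in \Irr(I_G(\theta)\mid\theta)$, and in particular $[M:I_M(\theta)]$ divides $\chi(1)$, where $I_M(\theta) = I_G(\theta) \cap M$ corresponds to a subgroup of $\PSL_2(q)$. If $\theta$ is $M$-invariant, the character triple $(M,N,\theta)$ is controlled by a central cover of $\PSL_2(q)$, so the ratios $\psi(1)/\theta(1)$ lie in $\{1,q,q\pm 1,(q\pm 1)/2\}$ (the degrees of $\SL_2(q)$); combined with the abelian quotient $G/M$ this produces only a bounded number of degrees. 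If $\theta$ is not $M$-invariant, then $[M:I_M(\theta)]$ is the index of a proper subgroup of $\PSL_2(q)$, which by Dickson's classification (Theorems \ref{maximaleven} and \ref{maximalodd}) is one of $q+1$, $q(q\pm 1)/2$, or an index of subfield, $A_4$, $S_4$, or $A_5$ type. Each such index is divisible by several of the primes of $q(q^2-1)$. The two-prime hypothesis is then the lever: since any two distinct degrees have $\gcd$ divisible by at most two primes counting multiplicity, and since for $q \geq 7$ at least two of $q-1$, $q$, $q+1$ are divisible by many primes, only boundedly many degrees can share the same large multi-prime factor (for instance at most one degree can be divisible by $q$ once $\omega(q)=f \geq 3$). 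Tallying the contributions of the invariant and non-invariant subsets against the primes of $q-1$, $q$, and $q+1$ caps $|D \setminus \cd(G/N)|$ so that $|\cd(G)| \leq 19$.

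The hard part will be the small fields $q \in \{7,8,9,11,13\}$, where $q-1$ and $q+1$ factor into very few primes and the two-prime hypothesis loses much of its counting power; these I expect to require separate treatment using the exact degrees of Lemma \ref{lem:White} together with the precise maximal-subgroup data from the Atlas. A second delicate point is the fully ramified situation, where an $M$-invariant $\theta$ fails to extend to $M$ (including the exceptional Schur multiplier at $q=9$): there one must pass to $\SL_2(q)$, keep track of the central character, and check that the faithful degrees $q \pm 1$ and $(q\pm 1)/2$ do not proliferate under the further extension to $G$. Controlling these two situations uniformly, rather than with an unbounded number of subcases, is where the argument must be most careful.
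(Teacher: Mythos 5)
Your proposal follows essentially the same route as the paper: the same three-way decomposition of $\cd(G)$ into $\cd(G/N)$ (bounded by $6$ via Theorem \ref{AlmostL2(q)}), the degrees lying over $M$-invariant characters of $N$ (controlled through extendibility and the projective degrees of $\PSL_2(q)$, with the two-prime hypothesis pinning down $\theta(1)$), and the degrees over non-$M$-invariant characters (controlled through Dickson's list of maximal subgroups), with separate treatment of the small fields and of $q=9$ because of the Schur multiplier. The approach is sound; what remains is the substantial case-by-case bookkeeping that produces the actual bounds ($|\cB_1|\le 4$, $|\cB_2|\le 5$ or $9$) whose sum with $|\cd(G/N)|\le 6$ gives $19$.
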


Assume the hypothesis of Theorem \ref{th:L2q}. Let $\cA_1\subseteq \Irr(N)$ be a set consisting of all $\theta\in\Irr(N)$ such that $\theta$ is $M$-invariant and let $\cA_2 \subseteq \Irr(N)$ be the set consisting of all $\theta\in\Irr(N)$ which is not $M$-invariant. For $i=1,2,$ let $\cB_i$ be the union of all $\cd (G | \theta)\setminus\cd(G/N)$ for all $\theta\in\cA_i.$ Then $\Irr(N)=\cA_1\cup \cA_2$ and $\cd(G)=\cB_1\cup \cB_2\cup \cd(G/N).$

When $q$ is odd, fix $\epsilon \in \{ \pm 1 \}$ so that $q\equiv \epsilon$ (mod $4$). Then $q-\epsilon$ is divisible by at least three primes. Notice that $G/M$ is always cyclic unless $G/N \cong \PGL_2(q) \langle \varphi^{f/2} \rangle$ with $f$ even and $q$ odd, where $G/M\cong C_2^2$. Moreover, if $q$ is odd, then $|G:M|\in\{1,2,f,2^2\}$.

\begin{lemma}\label{lem:L2MInvariant} Under the hypothesis of Theorem \ref{th:L2q}, we have $|\cB_1|\le 4$ if $q\neq 9.$
\end{lemma}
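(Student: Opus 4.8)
The plan is to analyze $\cd(G\mid\theta)$ for a nontrivial $M$-invariant $\theta\in\cA_1$ by Clifford theory and then to control the union over all such $\theta$. First I would pass to the character triple $(M,N,\theta)$: since $M/N\cong\PSL_2(q)$ is simple and $\theta$ is $M$-invariant, the degrees in $\cd(M\mid\theta)$ are exactly $\theta(1)\cdot D$, where $D$ is the set of degrees of the irreducible projective representations of $\PSL_2(q)$ attached to the cohomology class determined by $\theta$. The decisive structural input is that for $q\ge 7$ with $q\neq 9$ the Schur multiplier of $\PSL_2(q)$ is $C_2$ (for $q$ odd) or trivial (for $q$ even); this is precisely where $q\neq 9$ is used, since $\PSL_2(9)\cong\Alt_6$ has multiplier $C_6$ and would contribute many additional projective degrees. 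Thus $D$ is either $\cd(\PSL_2(q))$ (trivial class) or the set of faithful character degrees of $\SL_2(q)$ (the nontrivial class, $q$ odd), and in every case $D\subseteq\{1,(q\pm1)/2,q-1,q,q+1\}$. Since $G/M$ is cyclic except when $G/N\cong\PGL_2(q)\langle\varphi^{f/2}\rangle$ (where it is $C_2^2$), every degree in $\cd(G\mid\theta)$ has the form $\theta(1)\,d\,e$ with $d\in D$ and $e$ a divisor of $|G:M|$, coming from the inertia of the relevant constituent in $G/M$.

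The engine of the bound is the arithmetic fact recorded before the lemma: writing $q\equiv\epsilon\pmod 4$ for $q$ odd, one has $\omega(q-\epsilon)\ge 3$, while for $q$ even $\omega(q)=f\ge 3$. I would exploit this through a \emph{marker} principle: because $\omega(q-\epsilon)\ge 3$, no two distinct degrees of $\cd(G)$ can both be divisible by $q-\epsilon$, so there is at most one such degree in $\cd(G)$, and since $\cd(G/N)$ already contains a multiple of $q-\epsilon$ (arising from $\cd(\PSL_2(q))$), that unique degree $u$ lies in $\cd(G/N)$. Now every nontrivial $M$-invariant $\theta$, whatever its class, produces a constituent with $d=q-\epsilon$ (the principal/discrete series survives both in $\PSL_2(q)$ and in the faithful part of $\SL_2(q)$), hence a degree $\theta(1)(q-\epsilon)e$ divisible by $q-\epsilon$. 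By the marker principle this degree must equal $u$; in particular it is \emph{not} new, so the $(q-\epsilon)$-shape is removed from $\cB_1$, and the equality $\theta(1)e=u/(q-\epsilon)$ pins $\theta(1)e$ to a single value. The analogous argument with the marker $q$ (a degree of $G/N$, being the Steinberg degree) disposes of $q$ even immediately: the shape $\theta(1)qe$ would force a second degree divisible by $q$, which is impossible, so $\cB_1=\emptyset$; the same reasoning kills the trivial class when $q$ is odd with $f\ge 3$. After the $(q-\epsilon)$-shape is removed, at most four shapes remain in the trivial case, namely $\theta(1)\cdot\{1,(q+\epsilon)/2,q+\epsilon,q\}$, and at most two in the faithful case, which yields $|\cB_1|\le 4$ once $\theta(1)$ is fixed.

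The main obstacle is controlling the union over all $\theta\in\cA_1$ rather than a single one: a priori many $\theta$ of different degrees could contribute, and the shape $d=1$ alone (an extension of $\theta$ to $M$) threatens to produce a distinct new degree $\theta(1)e$ for each value of $\theta(1)$. The marker argument is exactly what prevents this, forcing $\theta(1)e_{q-\epsilon}$ to take one value $u/(q-\epsilon)$; the remaining work is to check that the finitely many inertia indices (all dividing $|G:M|\in\{1,2,f,2^2\}$), together with the shifts between the shape-specific indices $e_d$, cannot spread $\theta(1)$ across enough values to break the count of four. I expect the genuinely delicate points to be: (i) ruling out that the trivial-class shapes $\{1,(q+\epsilon)/2,q+\epsilon,q\}$ and the faithful-class shapes $\{(q-\epsilon)/2,q+\epsilon\}$ together exceed four, so that one must either show the two classes do not both occur or exhibit the forced coincidences of degrees; and (ii) the small cases $f\in\{1,2\}$, where neither $q$ nor $(q+\epsilon)/2$ is a strong marker and the elimination relies only on $q-\epsilon$ and on direct comparison with the explicit degree sets of Table \ref{tab:L2}. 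The $C_2^2$ case $\PGL_2(q)\langle\varphi^{f/2}\rangle$ also needs separate treatment, since there $G/M$ is not cyclic and a single constituent may split into two degrees.
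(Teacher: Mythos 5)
Your overall strategy is the same as the paper's: reduce $\cd(M\mid\theta)$ to $\theta(1)$ times one of two projective degree sets via the Schur multiplier of $\PSL_2(q)$ (this is indeed where $q\neq 9$ enters), and then use degrees divisible by at least three primes --- $q-\epsilon$ for $q$ odd, $q=2^f$ for $q$ even --- as ``markers'' that can occur only once in $\cd(G)$ to pin down $\theta(1)$ and kill shapes. The identification of the faithful class as $\{q\pm 1,(q-\epsilon)/2\}\theta(1)$ and the observation that $\cB_1=\emptyset$ for $q$ even are correct and match the paper's Cases 1(a) and 2(a)--(c).

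However, the proof is not complete, and the gap is exactly where the lemma's content lies. First, your marker argument pins only the product $\theta(1)e$ for the \emph{one} constituent lying over the $(q-\epsilon)$-degree; other constituents of the same $\theta$ induce with different indices $e$, and different $\theta\in\cA_1$ need not share the same $\theta(1)$, so ``$|\cB_1|\le 4$ once $\theta(1)$ is fixed'' does not bound the union $\cB_1$. Your own tally leaves four trivial-class shapes plus two faithful-class shapes, each potentially multiplied by several divisors of $|G:M|\in\{1,2,f,4\}$, i.e.\ well more than four candidates; you flag this as ``delicate point (i)'' but do not resolve it. Second, the claim that the Steinberg marker ``kills the trivial class when $q$ is odd with $f\ge 3$'' fails when $G/M\cong C_2^2$ (so $f$ even, $f\ge 4$): there an invariant $\theta$ extending to $M$ but not to $G$ contributes new degrees of the form $2\theta(1),4\theta(1)$ (and a priori $2q\theta(1),4q\theta(1)$), which is the paper's Case 2(b); you note this case ``needs separate treatment'' but do not treat it. The paper closes these gaps by running through the explicit list of possible $G/N$ from Table \ref{tab:L2}, computing $\cd(G\mid\theta)\setminus\cd(G/N)$ in each inertia configuration (e.g.\ obtaining $\cB_1\subseteq\{2(3^f-1),(3^f+1)/2\}$ for $\PGL_2(3^f)\langle\varphi\rangle$ and $\cB_1\subseteq\{2,4,2q,4q,(q-1)/2\}$ with $2q,4q$ mutually exclusive for $\PGL_2(q)\langle\varphi^{f/2}\rangle$), and only then taking the union. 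Without that case-by-case computation, the bound $4$ is asserted rather than proved.
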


\begin{proof}
Let $\theta\in\cA_1$ and let $I=I_G(\theta)$. Then $M\leq I\le G.$ We consider the following cases.

(\textbf{1}) $I\neq G.$  Write  $\theta^M = \sum e_i\phi_i,$ where $\phi_i\in\Irr(M|\theta)$.

(a) Assume that $q=2^f\ge 8.$ As the Schur multiplier of $M/N$ is trivial and $M/N$ is perfect, $\theta$ has a unique extension $\theta_0\in\Irr(M).$ So, $\theta_0$ is $I$-invariant and since $I/M$ is cyclic, $\theta_0$ extends to $I$ and hence $\theta$ extends to $I.$ As $I/N$ has an irreducible character $\gamma$ of degree $q$ which is the extension of the Steinberg character of $M/N$ of degree $q.$
 By Gallagher's theorem, $I$ has an irreducible character $\phi\in\Irr(I|\theta)$ of degree $q\theta(1)$. Thus $\phi^G\in\Irr(G)$ and $\phi^G(1)=|G:I|\phi(1)=|G:I|q\theta(1)$. However, this violates the two-prime hypothesis since $q\in\cd(G/N)$ and $q$ is divisible by three primes.

(b) $q$ is odd and $|G/M|$ is a prime. Then $I=M$.
It follows that $\cd(I|\theta)=\{1,(q+\epsilon)/2,q,q\pm 1\}\theta(1)$ or $\{(q-\epsilon)/2,q\pm 1\}\theta(1)$. Hence $\cd(G|\theta)=\{1,(q+\epsilon)/2,q,q\pm 1\}\theta(1)|G:I|$ or $\{(q-\epsilon)/2,q\pm 1\}\theta(1)|G:I|.$ Since $\gcd(q-1,q+1)=2$ and $|G:I|>1$, we have $\theta(1)=1.$ Therefore, $\cd(G|\theta)\setminus\cd(G/N)\subseteq \{1,(q+\epsilon)/2\}|G:M|$.

(c) $q$ is odd and $|G/M|$ is not a prime. Then $G/N\cong \PGL_2(3^f)\langle \varphi\rangle$ with $f$ an odd prime and $3^f-1$ a product of two primes; or $\PGL_2(q)\langle \varphi^{f/2}\rangle $ where $f$ is even, $q$ is odd and $q+1$ is divisible by at most two primes.

Since $|G:M|$ is divisible by two primes and $(q\pm 1)\theta(1)\in\cd(M|\theta)$, we deduce that $I\neq M.$ Let $\psi_i\in\Irr(I|\theta)$ such that $q-(-1)^i$ divides $\psi_i(1)$ for $i=1,2.$ Clearly $\psi_1(1)\neq \psi_2(1)$ so $\psi_i^G(1)=|G:I|\psi_i(1),i=1,2$ are two distinct degrees of $G.$ It follows that $\theta(1)=1.$

Assume first that $G/N\cong\PGL_2(3^f)\langle \varphi\rangle.$ If $\theta$ extends to $M$ then it is extendible to $\theta_0\in\Irr(I)$ as $I/M$ is cyclic. So by Gallagher's theorem, $\gamma=\theta_0\mu\in\Irr(I|\theta)$ where $\mu$ is an extension of the Steinberg character of $M/N$ to $I/N.$ Then $\gamma^G\in\Irr(G|\theta)$ with $\gamma^G(1)=|G:I|q\theta(1)>q=3^f.$ Since $f\ge 3,$ we obtain a contradiction. Thus $\theta$ is not extendible to $M$ and hence if $\theta^M=\sum f_i\psi_i$ with $\psi_i\in\Irr(M|\theta)$, then $\{f_i\}=\{(q+1)/2,q\pm 1\}$. Notice that $q=3^f\equiv -1$ (mod $4$). It follows that $\cd(I|\theta)\subseteq \{(q+1)/2,q\pm 1\}\cup \{(q+1)/2,q\pm 1\}\cdot |I:M|$ and so \[\cd(G|\theta)\subseteq \{(q+1)/2,q\pm 1\}\cdot|G:I|\cup \{(q+1)/2,q\pm 1\}\cdot 2f.\] Since $\cd(G/N)=\{1,q-1,q,(q-1)f,(q+1)f\}$ and $G$ satisfies the two-prime hypothesis, we deduce that $\cd(G|\theta)\setminus \cd(G/N)\subseteq \{2(q-1)\}.$

Assume next that $G/N\cong\PGL_2(q)\langle \varphi^{f/2}\rangle $, where $f$ is even, $q$ is odd and $q+1$ is divisible by at most two primes.
In this case $$\cd(I|\theta)\subseteq \{1,q,(q\pm1)/2,q\pm 1\}\cup \{1,q,(q\pm1)/2,q\pm 1\}\cdot 2$$ and so \[\cd(G|\theta) \subseteq \{2,2q,q\pm1,2(q\pm 1)\}\cup \{4,4q,2(q\pm1),4(q\pm 1)\} .\] Since $\cd(G/N)=\{1,q+1,q,2(q\pm1)\}$ and $G$ satisfies the two-prime hypothesis, we deduce that $\cd(G|\theta)\setminus \cd(G/N)\subseteq \{2,4,2q,4q\}.$ The last two degrees cannot occur at the same time as $q$ is divisible by at least two primes.

(\textbf{2}) $I= G.$

(a) $\theta$ extends to $\theta_0\in\Irr(G).$ Using Gallagher's theorem and the fact that $\cd(G/N)$ contains a degree divisible by three primes, we have $\theta(1)=1$ and $\cd(G|\theta)\subseteq \cd(G/N)$.

 (b) $\theta$ extends to $\theta_0\in\Irr(M)$ but $\theta$ is not extendible to $G.$ It follows that $G/M$ is noncyclic.  Then $G/N\cong\PGL_2(q)\langle\varphi^{f/2}\rangle$ with $q$ odd, $f$ even. Let $T/N\cong \PGL_2(q).$ Then $\theta$ extends to $\theta_0\in\Irr(T).$ So $\cd(T|\theta)=\{1,q,q\pm 1\}\theta(1).$
Since $\cd(G|\theta)$ contains a degree divisible by $(q-1)\theta(1)$, where $q-1$ is divisible by three primes and $2(q-1)\in\cd(G/N)$, we deduce that  $\theta(1)=1$ or $2.$ If $\theta(1)=1,$ then $\cd(G|\theta)\setminus\cd(G/N)\subseteq \{2,2q\}$; and if $\theta(1)=2$, then $\cd(G|\theta)\setminus\cd(G/N)\subseteq \{2,4,2q,4q\}$. The last two degrees cannot occur at the same time.

(c) $\theta$ is not extendible to $M.$ Then $q$ is odd. Write $\theta^M=\sum f_i\psi_i.$ Then $\cd(M|\theta)=\{q\pm \epsilon,(q-\epsilon)/2\}\cdot \theta(1)$. In particular, $\cd(G|\theta)$ contains a degree, say $d$, divisible by $(q- \epsilon)\theta(1).$ We consider the following cases.

(i) $G/N\cong\PSL_2(q)$ or $\PGL_2(q).$ Then $q-\epsilon\in\cd(G/N).$ It follows that $\theta(1)=1$ as $q-\epsilon$ is divisible by at least three primes.
Since $|G:M|\le 2,$ we have $$\cd(G|\theta)\subseteq \{q\pm \epsilon,(q-\epsilon)/2\}\cup \{q\pm \epsilon,(q-\epsilon)/2\}\cdot |G:M|.$$ As $\{q\pm \epsilon\}\subseteq \cd(G/N)$ we deduce that $\cd(G|\theta)\setminus \cd(G/N)\subseteq \{2(q+\epsilon),(q-\epsilon)/2\}.$

(ii) $f$ is even and $|G:M|=2.$ In this case, $\epsilon=1$ and $\{q+\epsilon,2(q\pm \epsilon)\}\subseteq \cd(G/N).$ Since $d\in\cd(G|\theta)$ is divisible by $(q-\epsilon)\theta(1)$, we see that  $\theta(1)=1$ or $2.$

 If $\theta(1)=2,$ then $$\cd(G|\theta)\subseteq \{2(q\pm \epsilon),q-\epsilon\}\cup \{4(q\pm \epsilon),2(q-\epsilon)\}.$$ Hence $\cd(G|\theta)\setminus\cd(G/N)=\emptyset.$ Notice that $2(q+\epsilon)$ is divisible by at least three primes.

If $\theta(1)=1$ then $$\cd(G|\theta)\subseteq \{q\pm \epsilon,(q-\epsilon)/2\}\cup \{2(q\pm \epsilon),q-\epsilon\}.$$ Hence  $\cd(G|\theta)\setminus\cd(G/N)\subseteq \{(q-\epsilon)/2\}$.

 (iii) $G/N\cong\PSL_2(3^f)\langle\varphi\rangle$ or $\PGL_2(3^f)\langle \varphi\rangle$ with $f$ an odd prime. Then $\epsilon=-1$ and $\{f(3^f\pm\epsilon)\}\subseteq \cd(G/N).$ Since $d\in\cd(G|\theta)$, $\theta(1)=1$ or $f$.

 Assume $\theta(1)=f.$ As $|G:M|$ divides $2f$, we deduce that $$\cd(G|\theta)\subseteq \{f(3^f\pm \epsilon),f(3^f-\epsilon)/2\}\cdot  \{1,2,f,2f\}.$$ Hence $\cd(G|\theta)\setminus\cd(G/N)=\emptyset.$

 Assume $\theta(1)=1.$ As $|G:M|$ divides $2f$, we deduce that $$\cd(G|\theta)\subseteq \{3^f\pm \epsilon,(3^f-\epsilon)/2\}\cdot  \{1,2,f,2f\}.$$ Hence if $G/N\cong\PGL_2(3^f)\langle\varphi\rangle,$ then $\cd(G|\theta)\setminus\cd(G/N)\subseteq \{(3^f-\epsilon)/2,2(3^f+\epsilon)\}$ and if $G/N\cong\PSL_2(3^f)\langle\varphi\rangle$, then $\cd(G|\theta)\setminus\cd(G/N)\subseteq \{(3^f-\epsilon)/2,3^f+\epsilon\}.$

(iv) $G/N\cong\PGL_2(q)\langle\varphi^{f/2}\rangle$, $q$ odd, $f$ even. Then $\epsilon=1$ and $\{q+1,2(q\pm 1)\}\subseteq \cd(G/N).$
In this case, we obtain that $\theta(1)\in\{1,2\}$ and \[\cd(G|\theta)\subseteq \{(q\pm 1)\theta(1),(q-1)\theta(1)/2\}\cdot\{1,2,4\}.\]
 In both cases, we have that $\cd(G|\theta)\setminus\cd(G/N)\subseteq \{(q-1)/2\}$.

 We now find the upper bound for $|\cB_1|$ for each possibility of $G/N$.
Assume first that $G/N\cong\PGL_2(3^f)\langle\varphi\rangle$ with $f$ an odd prime. It follows from Cases 1(c) and 2c(iii) that $\cB_1\subseteq \{2(3^f-1),(3^f+1)/2\}$ and so $|\cB_1|\le 2.$

Assume next that $G/N\cong\PGL_2(q)\langle\varphi^{f/2}\rangle$ with $q$ odd and $f$ even. It follows from Cases 1(c), 2(b) and 2c(iv) that $\cB_1\subseteq \{2,4,2q,4q,(q-\epsilon)/2\}$ and so $|\cB_1|\le 4.$

For the remaining cases, we can check that $|\cB_1|\le 4.$ The proof is now complete.
\end{proof}


\begin{lemma}\label{lem:L2(8)} Theorem \ref{th:L2q} holds if $q=8.$
\end{lemma}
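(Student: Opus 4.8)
The plan is to reduce to the two possible almost simple quotients and then control the two Clifford-theoretic pieces $\cB_1$ and $\cB_2$ separately, the whole argument being driven by the single fact that $8=2^3\in\cd(G/N)$.

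First I would pin down $G/N$. Since $q=8=2^3$, Theorem \ref{AlmostL2(q)} (equivalently Lemma \ref{lem:White} with $p=2$, $f=3$) leaves only $G/N\cong\PSL_2(8)$, with $\cd(G/N)=\{1,7,8,9\}$, or $G/N\cong\Aut(S)=S\langle\varphi\rangle$, with $\cd(G/N)=\{1,7,8,21,27\}$; in particular $|\cd(G/N)|\le 5$. Because $q$ is even, the inertia analysis of Lemma \ref{lem:L2MInvariant} collapses: its case (1) forces $I_G(\theta)=G$ for every $M$-invariant $\theta$ (otherwise a degree divisible by $q=8$ and distinct from $8$ would appear), its cases 2(b) and 2(c) cannot occur for even $q$, and case 2(a) gives $\cd(G\mid\theta)\subseteq\cd(G/N)$. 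Hence $\cB_1=\emptyset$, and $\cd(G)=\cd(G/N)\cup\cB_2$, so it remains to bound $\cB_2$.

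The engine for bounding $\cB_2$ is that $8\in\cd(G/N)$ with $\omega(8)=3$ forces every other degree of $G$ to be indivisible by $8$: if $d\ne 8$ were divisible by $8$, then $\gcd(d,8)=8$ would have three prime factors. Now each degree in $\cB_2$ is a multiple of the index $|M:I_M(\theta)|$ of a proper subgroup of $M/N\cong\PSL_2(8)$, so that index must be indivisible by $8$, i.e. $I_M(\theta)/N$ must have even order. Using Dickson's list (Theorem \ref{maximaleven}) I would enumerate the even-order proper subgroups of $\PSL_2(8)$: the point stabilizer $C_2^3\rtimes C_7$ (index $9$), the dihedral groups $D_{14}$ (index $36$) and $D_{18}$ (index $28$), the subgroup $S_3$ of order $6$ (index $84$), and the $2$-subgroups $C_2^3$ (index $63$), $C_2^2$ (index $126$), $C_2$ (index $252$). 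Each of $D_{14}$, $D_{18}$, $S_3$ carries an irreducible character of degree $2$, which by Clifford theory yields a degree of $\cd(M\mid\theta)$ equal to twice the index; since $2\cdot 28$, $2\cdot 36$, $2\cdot 84$ are all divisible by $8$, these three types are excluded.

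It then remains to handle the survivors. Every $2$-subgroup has index divisible by $63=3^2\cdot 7$ (namely $63,126,252$), and for the point stabilizer the degree-$7$ character likewise contributes a multiple of $63$; since $\omega(63)=3$, the two-prime hypothesis permits at most one degree of $G$ to be divisible by $63$, and none lies in $\cd(G/N)$. The only degree of $\cB_2$ not divisible by $63$ comes from the linear characters of the point stabilizer and equals $9\,\theta(1)$; comparing it with the companion degree $63\,\theta(1)$ (when $G=M$), or with a forced degree $27\,\theta(1)$ against $27\in\cd(G/N)$ (when $G=S\langle\varphi\rangle$), pins $\theta(1)=1$, so this degree is exactly $9$. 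Thus $\cB_2\subseteq\{9\}\cup\{D\}$ for a single multiple $D$ of $63$, giving $|\cB_2|\le 2$ and $|\cd(G)|\le 5+2=7\le 19$.

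The step I expect to be the main obstacle is the passage from $M$ to $G=S\langle\varphi\rangle$: one must track the inertia group $T=I_G(\theta)$, the factor $|G:T|$, and the order-$3$ field automorphism permuting $\Irr(I_M(\theta)\mid\theta)$, and one must also treat the non-extending cases for the $2$-subgroups via the Schur multipliers of $C_2^3$ and $C_2^2$. The saving grace I would lean on is that all these refinements only produce further multiples of $63$, or multiples of $27$ that must coincide with $27\in\cd(G/N)$, so the single-degree bound coming from $\omega(63)=3$ absorbs them and the conclusion is unaffected.
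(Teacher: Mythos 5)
Your overall strategy matches the paper's: bound $\cB_1$ and $\cB_2$ separately, using the maximal subgroups of $\PSL_2(8)$ and the fact that $8\in\cd(G/N)$ with $\omega(8)=3$. Your observation that $\cB_1=\emptyset$ for $q=8$ is a correct sharpening of the paper's $|\cB_1|\le 4$, and your exclusion of the inertia subgroups $\textrm{D}_{14}$, $\textrm{D}_{18}$, $\Sym_3$ is also valid (the paper instead keeps them and charges one degree each to the buckets ``divisible by $28$'' and ``divisible by $18$''), though your justification silently assumes $\theta$ extends to $I_M(\theta)$ there; this is true, but only because those groups have trivial Schur multiplier, and you should say so.

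There is, however, a genuine gap in your treatment of the point stabilizer. Your claim that ``the only degree of $\cB_2$ not divisible by $63$ comes from the linear characters of the point stabilizer and equals $9\theta(1)$'' presupposes that a character of ratio $1$ or $7$ exists in $\Irr(I_M(\theta)\mid\theta)$, i.e.\ that $\theta$ extends to $I_M(\theta)\cong 2^3{:}7$ (over $N$). If $\theta$ is invariant under the point stabilizer but does \emph{not} extend to the normal subgroup $N_1$ with $N_1/N\cong C_2^3$, then every degree in $\cd(I_M(\theta)\mid\theta)$ is an even multiple of $\theta(1)$ (ratios $2$ or $14$), and the resulting degrees of $G$ are divisible by $18\theta(1)$ --- not by $63$, and not equal to $9$. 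Your closing remark that the non-extending refinements ``only produce further multiples of $63$, or multiples of $27$'' addresses the $2$-subgroups (where the index is already a multiple of $63$) but not this case, so your conclusion $\cB_2\subseteq\{9\}\cup\{D\}$ with a single multiple $D$ of $63$, hence $|\cB_2|\le 2$, is not established. The paper handles exactly this situation by noting that non-extendibility to $N_1$ forces every degree in $\cd(T\mid\theta)$ to be even, yielding a fourth bucket of degrees divisible by $2\cdot 3^2=18$ (of which, since $\omega(18)=3$, at most one can occur); this repairs your argument at the cost of $|\cB_2|\le 3$ or $4$, which still gives $|\cd(G)|\le 9$, comfortably within the claimed bound of $19$.
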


\begin{proof} We have that $G/N\cong\PSL_2(8)$ or $\PSL_2(8)\cdot 3$ and $\cd(G/N)=\{1,7,8,9\}$ or $\{1,7,8,21,27\}$, respectively. Moreover, the Schur multiplier of $\PSL_2(8)$ is trivial.

Let $\theta\in\cA_2$ and let $I=I_G(\theta).$ Then $M\nleq I$ and since $G/M$ is trivial or cyclic of prime order, we have $G=MI$ so $|G:I|=|M:I\cap I|.$ Let $M\cap I\leq T\leq M$ such that $T/N$ is maximal in $M/N.$

If $|M:T|$ is divisible by three primes, then every degree in $\cd(G|\theta)$ is divisible by $|M:T|$ so $|\cd(G|\theta)|=1.$ Notice that $M/N$ has two such maximal indices which are $2^2\cdot 7$ and $2^2\cdot 3^2$.  If $|M:T|$ is divisible by at most two primes, then $T/N\cong 2^3:7$, a Frobenius group of order $56$.  If $M\cap I\neq T,$ then every degree in $\cd(G|\theta)$ is divisible by $2\cdot 9=18$ or $7\cdot 9=63$ so $|\cd(G|\theta)|\le 2$ in this case. Otherwise $T=M\cap I$. If $\theta$ extends to $T,$ then $\cd(M|\theta)=\{1,7\}\cdot 9\theta(1)$ so $\theta(1)=1$ and $\cd(G|\theta)\subseteq \{9,9\cdot 7\}\cup \{9\cdot 2, 2\cdot 7\cdot 9\}.$ If $\theta$ is not extendible to $T,$ Then $\theta$ is not extendible to $N_1$ where $N_1/N\cong 2^3.$ So, every degree in $\cd(T|\theta)$ is even and thus every degree in $\cd(G|\theta)$ is divisible by $2\cdot 9.$ Therefore, every degree in $\cB_2$ is divisible by $2^2\cdot 7,2\cdot 3^2,7\cdot 3^2$ (one degree each)  or  is the degree $9$. Hence $|\cB_2|\le 4.$

Since $|\cB_1|\le 4$ by Lemma \ref{lem:L2MInvariant} and $|\cd(G/N)|\le 5,$ we have $|\cd(G)|\le 5+4+4=13.$
\end{proof}

\begin{lemma}\label{lem:L2(9)} Theorem \ref{th:L2q} holds if $q=9.$
\end{lemma}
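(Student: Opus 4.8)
The plan is to reproduce the three-way decomposition $\cd(G)=\cd(G/N)\cup\cB_1\cup\cB_2$ used in Lemma~\ref{lem:L2(8)}, replacing the two inputs that fail at $q=9$. First I would pin down $G/N$. Since $M/N\cong\PSL_2(9)\cong\Alt_6$ and any quotient of $G$ inherits the two-prime hypothesis, Theorem~\ref{AlmostL2(q)} applied with $q=9$ (so $f=2$, and exception~$(v)$ of Lemma~\ref{lem:White} is in force, deleting the degree $2(q+1)=20$ from $S\langle\varphi\rangle$ and $S\langle\overline\delta\varphi\rangle$) leaves exactly the possibilities $\PSL_2(9)$, $\Sym_6$, ${\rm M}_{10}$, $\PGL_2(9)$, and $\PGL_2(9)\langle\varphi\rangle\cong\Aut(\Alt_6)$, with nontrivial degrees $\{5,8,9,10\}$, $\{5,9,10,16\}$, $\{9,10,16\}$, $\{8,9,10\}$, and $\{9,10,16,20\}$ respectively. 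In every case $|\cd(G/N)|\le 5$ and $G/M$ is trivial, $C_2$, or $C_2\times C_2$, so it suffices to prove $|\cB_1|+|\cB_2|\le 14$. I would also record that each of these degree sets contains a degree divisible by at least three primes counting multiplicity (one of $8$, $16$, $20$), a fact to be played off repeatedly against the two-prime hypothesis.

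For $\cB_1$ (the $M$-invariant $\theta\in\cA_1$) lies the first real obstruction, and it is exactly why $q=9$ is excluded from Lemma~\ref{lem:L2MInvariant}: $\Alt_6$ has the exceptional Schur multiplier $C_6$ rather than the generic $C_2$. I would redo the Clifford-theoretic analysis while tracking the cohomology class of $\theta$, whose order $c$ divides $6$. For such a $\theta$ one has $\cd(M|\theta)=\theta(1)\,\Delta_c$, where $\Delta_c$ is the set of degrees of the irreducible characters of the cover $c\cdot\Alt_6$ that are faithful on the center; concretely $\Delta_1=\{1,5,8,9,10\}$, $\Delta_2=\{4,8,10\}$ (from $\SL_2(9)=2\cdot\Alt_6$), $\Delta_3=\{3,6,9,15\}$ (from $3\cdot\Alt_6$), and $\Delta_6$ the faithful degrees of $6\cdot\Alt_6$. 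Passing from $M$ to $G$ across the small quotient $G/M$ by Gallagher's theorem and Clifford theory, and then subtracting $\cd(G/N)$, the presence of a three-prime degree together with a pair such as $\{8,10\}$ inside $\theta(1)\Delta_c$ forces $\theta(1)$ to be $1$ or a prime and forces the $G/M$-extension data to be essentially trivial, so that the only genuinely new contributions come from the extra cover-degrees $3,4,6,12,15$ and their permitted $G/M$-multiples. Pruning these against $9,10,16,20$ by the two-prime hypothesis leaves a bounded list, and I expect $|\cB_1|\le 6$.

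For $\cB_2$ (the non-invariant $\theta\in\cA_2$) I would argue through the maximal subgroups of $\Alt_6$, which by the Atlas~\cite{Atlas} are $\Alt_5$ (index $6$), $3^2:4$ (index $10$), and $\Sym_4$ (index $15$). Here is the second extra difficulty of $q=9$: for $q=8$ two of the three maximal indices were divisible by three primes and so could be discarded immediately, whereas now all three indices $6=2\cdot 3$, $10=2\cdot 5$, $15=3\cdot 5$ have exactly two prime divisors, so the index alone never forces $|\cd(G|\theta)|=1$. Choosing $M\cap I\le T\le M$ with $T/N$ maximal, the index $|M:T|$ divides every degree of $\cd(G|\theta)$; being a product of two distinct primes, the two-prime hypothesis then forces $\gcd(a,b)=|M:T|$ for distinct $a,b\in\cd(G|\theta)$, i.e.\ the cofactors $a/|M:T|$ are pairwise coprime. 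Feeding in $\cd(\Alt_5)=\{1,3,4,5\}$, $\cd(3^2:4)=\{1,4\}$, $\cd(\Sym_4)=\{1,2,3\}$ together with the projective degrees coming from the restriction of the $C_6$ multiplier (for instance the $\SL_2(5)=2\cdot\Alt_5$ faithful degrees $\{2,4,6\}$ on the $\Alt_5$), pairwise coprimality caps the number of degrees contributed by each $\theta$, and the global two-prime constraints among the multiples of $6$, $10$, and $15$ (which can coincide only at $30$) cap the total, giving a bound $|\cB_2|\le 8$.

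The main obstacle is the $\cB_1$ bookkeeping forced by the exceptional multiplier $C_6$: unlike every larger $\PSL_2(q)$, here one must control projective character degrees arising from the $C_3$- and $C_6$-covers (the degrees $3,6,12,15$) in addition to the ordinary and $\SL_2$-degrees, and simultaneously show these cannot proliferate once one extends across $G/M\cong C_2\times C_2$ in the $\Aut(\Alt_6)$ case. Combined with the fact that all three maximal subgroups of $\Alt_6$ have index a product of exactly two primes, so that, unlike at $q=8$, no index immediately collapses $\cd(G|\theta)$, this is precisely why $q=9$ must be isolated. Once $|\cB_1|$ and $|\cB_2|$ are bounded with $|\cB_1|+|\cB_2|\le 14$, we conclude $|\cd(G)|\le|\cd(G/N)|+|\cB_1|+|\cB_2|\le 5+6+8=19$, as required; the precise constant-counting in the last two paragraphs is the routine but lengthy part.
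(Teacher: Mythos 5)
Your overall strategy is the same as the paper's: split $\cd(G)$ into $\cd(G/N)\cup\cB_1\cup\cB_2$, handle $\cB_1$ by tracking the projective degree sets of the covers $2\cdot\Alt_6$, $3\cdot\Alt_6$, $6\cdot\Alt_6$ (the degrees $\{4,8,10\}$, $\{3,6,9,15\}$, $\{6,12\}$), and handle $\cB_2$ through the three maximal subgroups $\Alt_5$, $3^2{:}4$, $\Sym_4$ of $\Alt_6$, all of whose indices have only two prime divisors. You also correctly identify why $q=9$ is excluded from Lemma \ref{lem:L2MInvariant}. The gap is in the quantitative step you defer as ``routine'': the claim $|\cB_1|\le 6$ cannot be obtained by the pruning you describe. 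Carrying out your own Clifford analysis (as the paper does) yields the candidate list $\cB_1\subseteq\{2,3,4,5,6,12,15,18,20,30,36,40,60\}$, and the subset $\{2,3,4,5,6,12,15,18,20,30\}$ of ten of these degrees is pairwise compatible with the two-prime hypothesis and compatible with $9$, $10$, $16$, $20\in\cd(G/N)$ (for instance $\gcd(12,4)=4$, $\gcd(30,20)=10$, $\gcd(18,12)=6$, $\gcd(20,16)=4$ all have $\omega\le 2$). So two-prime compatibility against $\cd(G/N)$ alone leaves up to $10$ survivors, not $6$, and your budget $|\cB_1|+|\cB_2|\le 6+8=14$ is not secured.

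The paper avoids this by not bounding $|\cB_1|$ in isolation. It first shows $|\cB_2|\le 5$ (each non-invariant $\theta$ contributes only degrees divisible by one of $2\cdot3\cdot5$, $2^2\cdot3$, $2^2\cdot5$, $3^2\cdot5$, or the single degree $15$, and the two-prime hypothesis allows at most one multiple of each such three-prime number in all of $\cd(G)$), and then bounds the union: among the $13$ candidates for $\cB_1$ at most one of each pair $\{30,60\}$, $\{12,36\}$, $\{20,40\}$ survives, and the survivors divisible by $2\cdot3\cdot5$, $2^2\cdot3$, $2^2\cdot5$ collide with the corresponding $\cB_2$ classes, giving $|\cB_1\cup\cB_2|\le 10+5-4=11$ and hence $|\cd(G)|\le 16$. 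To repair your write-up you should either adopt this joint overlap count or sharpen $|\cB_2|$ to $5$ and accept $|\cB_1|\le 10$; as written, the two asserted constants are not proved and the one you lean on hardest is false as stated.
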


\begin{proof}
We have that $G/N\cong\Alt_6,\Sym_6=\Alt_6\cdot 2_1, \PGL_2(9)=\Alt_6\cdot 2_2,\textrm{M}_{10}=\Alt_6\cdot 2_3$ or $\Alt_6\cdot 2^2.$

Let $\theta\in\cA_2$ and let $I=I_G(\theta).$ Then $I\leq IM\leq G$ and $M\nleq I.$ We see that $|G:I|$ is divisible by $|IM:I|=|M:I\cap M|.$ Let $M\cap I\le T\leq M$ such that $T/N$ is maximal in $M/N.$ From \cite{Atlas}, we have that $T/N\cong \Alt_5,3^2:4$ or $\Sym_4$ with index $2\cdot 3, 2\cdot 5$ and $3\cdot 5,$ respectively in $M/N$. Observe first that $|G:M|=1,2$ or $4.$

If $T\neq M\cap I,$ then every degree in $\cd(G|\theta)$ is divisible by one of the numbers $\{5,2\cdot 3,2\cdot 5\}\cdot 2\cdot 3$, $\{2,3^2\}\cdot 2\cdot 5$ or $\{2,3,2^2\}\cdot 3\cdot 5.$ So every degree in $\cd(G|\theta)$ is divisible by $2\cdot 3\cdot 5, 3^2\cdot 5,2^2\cdot 3^2$ or $2^2\cdot 5.$

Assume that $T=M\cap I.$ Suppose further that $\theta$ extends to $T.$ If $T/N\not\cong \Sym_4,$ then $\cd(G|\theta)$ contains characters of degree divisible by $8$ but  not a power of $2$, which is impossible as $8$ or $16$ is a degree of $G/N$. Assume now that $T/N\cong \Sym_4.$ Then $\cd(M|\theta)=\{1,2,3\}\cdot3\cdot 5\theta(1)$. It follows that $\theta(1)=1$ and $\cd(G|\theta)\subseteq \{3\cdot 5a, 2\cdot 3\cdot 5a,3^2\cdot 5a |a=1,2,4\}$ since $|G:M|$ divides $4$. By the two-prime hypotheses,  $\cd(G|\theta)$ contains possibly $3\cdot 5,3^2\cdot 5$ and a degree divisible by $2\cdot 3\cdot 5.$

Assume that $\theta$ does not extend to $T.$ Then every degree in $\cd(T|\theta)$ is divisible by 2,3 or 2, respectively. So, every degree in $\cd(G|\theta)$ is divisible by $2^2\cdot 3, 2\cdot 3\cdot 5$ or $2\cdot 3\cdot 5,$ respectively.

Therefore, if $\theta\in\cA_2,$ then every degree in $\cB_2$ is divisible by $2\cdot 3\cdot 5,2^2\cdot 3,2^2\cdot 5,3^2\cdot 5$ or is the degree $3\cdot 5$ and thus $|\cB_2|\le 5.$

\medskip
Let $\theta\in\cA_1$ and let $I=I_G(\theta).$ Then $M\leq I.$

Assume that $\theta$ extends to $M.$ Then $\cd(M|\theta)=\{1,5,2^3,3^2,2\cdot 5\}\theta(1)$. Since $8$ or $16$ is in $\cd(G/N),$ we deduce that $\theta(1)=1 $ or $2.$ So, $\cd(G|\theta)\subseteq  \cup_{1\leq a\mid 8}\{1,5,2^3,3^2,2\cdot 5\}\cdot a$. Moreover, $\{9,10\}\subseteq \cd(G/N)$, we deduce that $\cd(G|\theta)\setminus\cd(G/N)\subseteq \{2,4,5,18,36,20\}$. Notice that $18$ and $36$ cannot occur at the same time.

Assume next that $\theta$ is $M$-invariant but not extendible to $M.$ Then $\cd(M|\theta)= \{4,8,10\}\theta(1), \{3,6,9,15\}\theta(1)$ or $ \{6,12\}\theta(1).$ Since $4\mid 8 $ and $6\mid 12$, we deduce that $\theta(1)=1$ in the first or the last case and $\theta(1)=1$ or $2$ in the second case. Since $8$ or $16$ is a degree of $G/N,$  $8,16,24\not\in\cd(G|\theta)\setminus \cd(G/N).$ It follows that $\cd(G|\theta)\subseteq \{3,4,6,12,15,20,18,30,36,40,60\}$.  Recall that $\{9,10\}\subseteq \cd(G/N)$.

Thus $\cB_1\subseteq \{2,3,4,5,6,12,15,18,20,30,36,40,60\}$. Since $30$ and $60$ are divisible by $2\cdot 3\cdot 5$,  $12$ and $36$ are divisible by $2^2\cdot 3$; $20$ and $40$ are divisible by $2^2\cdot 5$  and $15$ is possibly in $\cB_2,$ we see that $|\cB_1\cup \cB_2|\le 10+5-4=11.$  Therefore, since $|\cd(G/N)|\le 5,$ we deduce that $|\cd(G)|\le |\cd(G/N)|+|\cB_1\cup \cB_2|\le 5+11=16.$
\end{proof}

\begin{lemma}\label{lem:L2even} Assume the hypothesis of Theorem \ref{th:L2q} and suppose that $q=2^f>8.$ Then $|\cB_2|\le 5$.
\end{lemma}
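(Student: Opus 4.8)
The plan is to mimic the structure of the proof of Lemma \ref{lem:L2(8)}, but now for a general even $q = 2^f > 8$, and to extract the bound on $\cB_2$ by analyzing how the non-$M$-invariant characters $\theta \in \cA_2$ of $N$ induce up to $G$ through the maximal subgroups of $M/N \cong \PSL_2(2^f)$. First I would fix $\theta \in \cA_2$ with inertia group $I = I_G(\theta)$. Since $M \nleq I$ and $G/M$ is cyclic (by the structure remarks preceding the lemmas, $G/M$ is always cyclic when $q$ is even), we get $G = MI$, so $|G:I| = |M : M \cap I|$ is a multiple of the index $|M/N : T/N|$ of some maximal subgroup $T/N$ of $M/N$ containing $(M \cap I)/N$. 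The whole point is that every degree in $\cd(G|\theta)$ is divisible by $|G:I|$, hence by $|M:T|$; so a maximal subgroup whose index is divisible by three primes contributes at most one degree.

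The key computation is therefore to run through the list of maximal subgroups of $\PSL_2(2^f)$ in Theorem \ref{maximaleven} and record which indices are divisible by at least three distinct primes (counting multiplicity). For $q = 2^f$ with $f > 3$, two of $q-1$ and $q+1$ are divisible by several primes; the point stabilizer $C_2^f \rtimes C_{q-1}$ has index $q+1$, the dihedral groups $\rm D_{2(q\mp1)}$ have indices $\tfrac{q(q\pm 1)}{2}$, and the subfield subgroups $\rm{PGL}_2(q_0)$ have index divisible by a Zsigmondy-prime-rich factor. I would show that all but a bounded number of these indices are divisible by three primes, so that for all but finitely many "bad" maximal subgroups $T/N$, the contribution to $\cB_2$ is at most one degree each. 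For the bad cases — the Frobenius subgroup $C_2^f \rtimes C_{q-1}$ of the point stabilizer playing the analogue of the $2^3:7$ case for $q=8$ — I would analyze whether $\theta$ extends to $T$: if it does, Gallagher/Clifford theory forces $\theta(1)$ small and yields degrees that are $q$ times a divisor of $q-1$, so I track these explicitly; if it does not, then every degree in $\cd(T|\theta)$ is even, so every degree in $\cd(G|\theta)$ picks up an extra factor of $2$ alongside the $q$ coming from the Steinberg-type extension, again constraining things sharply. Collecting the finitely many possible divisibility types (each contributing one degree, as the two-prime hypothesis prevents two distinct degrees sharing three common primes), I would conclude $|\cB_2| \le 5$.

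The hard part will be the bookkeeping in the bad case coming from the point stabilizer $C_2^f \rtimes C_{q-1}$, whose index $q+1$ may be divisible by only two primes when $q+1$ is prime or a product of two primes. Here several $T/N$-characters of distinct degrees can survive, and I must carefully separate the subcase where $\theta$ extends to $T$ (getting degrees of the form $q \cdot d$ for $d \mid q-1$) from the non-extendible subcase (getting an extra factor of $2$), exactly as in the $q=8$ argument but now tracking the possibly larger set of degrees $d \mid q-1$. The two-prime hypothesis is what saves me: any two distinct degrees in $\cd(G|\theta)$ whose greatest common divisor carries three primes cannot both occur, so within each divisibility class at most one degree survives, and I expect the distinct surviving classes (together with the single degree $q$ of Steinberg type) to number at most five.
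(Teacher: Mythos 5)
Your overall skeleton matches the paper's: reduce to the maximal overgroups $T/N$ of $(M\cap I)/N$ in $M/N\cong\PSL_2(2^f)$, observe that the dihedral subgroups $\mathrm{D}_{2(q\pm1)}$ and the subfield subgroups have index divisible by at least three primes so each such class contributes at most one degree to $\cB_2$, and then isolate the point stabilizer $C_2^f\rtimes C_{q-1}$ of index $q+1$ as the hard case. Up to that point the proposal is sound.

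The genuine gap is in the point-stabilizer case. First, you only distinguish ``$\theta$ extends to $T$'' from ``$\theta$ does not extend to $T$,'' implicitly taking $M\cap I=T$; but $M\cap I$ can be any of the subgroups $2^e\colon d$ of the Frobenius group $T/N\cong 2^f\colon(q-1)$, and the paper must treat the subcases $L\nleq M\cap I$ and $L\le M\cap I$ separately, where $L/N\cong 2^f$ is the Frobenius kernel. Second, and more seriously, the ``extra factor of $2$'' you extract in the non-extendible case only forces divisibility by $2(q+1)$, and $2(q+1)$ can have $\omega=2$ (for instance $q=16$, where $2(q+1)=2\cdot 17$; this is exactly the Fermat-prime row of Table \ref{tab:L2}). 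Two distinct degrees of the form $2(q+1)e_i\theta(1)$ then have a gcd carrying only two primes, so the two-prime hypothesis does \emph{not} collapse them to one degree, and your count is unbounded at this point. The paper closes this hole with a finer argument: when $\theta$ does not extend to $L$, Isaacs' fully-ramified-section theorem produces $A$ with $N\le A\le L$ and $|L:A|=2^{2a}$, so $2^a$ divides every degree in $\cd(L|\theta)$; if $a\ge 2$ one gets divisibility by $4(q+1)$ (always three primes counting multiplicity), while if $a=1$ a regular orbit of the Frobenius complement on $\Irr(A|\theta)$ yields a character of $L$ with stabilizer $L$ in $I$, hence a degree divisible by $q^2-1$ (at least three primes since $f\ge 4$). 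Only with the dichotomy ``every degree is divisible by $q^2-1$ or by $4(q+1)$'' does the point stabilizer contribute at most $2$ degrees, giving $3+2=5$. Without that step the bound does not follow from your sketch. (A minor additional slip: the degree $q$ lies in $\cd(G/N)$ and so is excluded from $\cB_2$ by definition, so it should not appear in your final tally.)
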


\begin{proof} Let $\theta\in\cA_2$ and let $I=I_G(\theta).$ Then $M\nleq I$ so $I\leq MI\le G$ and thus $|G:I|$ is divisible by $|M:T|$ where $M\cap I\le T\leq M$ such that $T/N$ is a maximal subgroup of $M/N\cong \PSL_2(2^f)$ with $f\ge 4.$

If $T/N\cong \textrm{D}_{2(q\pm 1)}$ or $\PSL_2(q_0)$ with $q=q_0^r,q_0\neq 2,r$ a prime, then $|M:T|$ is divisible by at least three primes, so $|\cd(G|\theta)|=1.$

So, assume that $T/N\cong 2^f:(2^f-1)$ which is a Frobenius group with kernel $2^f.$ Observe that $M\cap I/N$ is a cyclic subgroup of a cyclic group of order $2^f-1$, an elementary abelian $2$-group of order $2^e$ with $1\le e\le f$ or a Frobenius group of order $2^e:d$ where $d$ divides $\gcd(2^e-1,2^f-1)$ and $1\le e\le f.$ Moreover, since $f\ge 4,$ $q^2-1=4^f-1$ is divisible by at least three primes.

We claim that every degree in $\cd(G|\theta)$ is divisible by $q^2-1$ or $4(q+1)$.
In both cases, $|\cd(G|\theta)|\le 1.$
Let $L/N\cong 2^f.$
Assume that $L\nleq M\cap I.$  Then $|T:M\cap I|$ is even.

If $|T:M\cap I|$ is divisible by $4$ then $|\cd(G|\theta)|=1$ and every degree in $\cd(G|\theta)$ is divisible by $4(q+1).$

Assume that $|T:M\cap I|_2=2.$ Then every degree in $\cd(G|\theta)$ is divisible by $2(q+1)$. If $\theta$ is not extendible to the Sylow $2$-subgroup of $M\cap I/N,$ then every degree in $\cd(M\cap I|\theta)$ is even so every degree in $\cd(G|\theta)$ is divisible by $4(q+1).$ Therefore, we can assume that $\theta$ extends to the Sylow $2$-subgroup of $M\cap I/N$ and thus it extends to $M\cap I.$ Hence $\cd(G|\theta)$ contains a degree divisible by $q^2-1.$

Assume $L\leq M\cap I$ where $L/N\cong 2^f.$ We can assume that $\theta$ is not extendible to $L$ as otherwise $q^2-1$ divides some degree in $\cd(G|\theta).$ In this case, $M\cap I/N=2^f:d$ where $d\mid 2^f-1.$

By Theorem 2.7 of \cite{Isaacs73}, $\theta$ determines a unique subgroup $A$ with $N \le A \le L$ satisfying $\theta$ extends to $A$ and every character in $\Irr(A| \theta)$ is fully ramified with respect to $L/A$.  This implies that $|L:A| = 2^{2a}$ for some integer $a$.  We know that $2^a$ will divide every degree in $\cd(L|\theta)$.  If $a > 1$, then $4(q+1)\theta(1)$ divides all degrees in $\cd(G|\theta)$.  Thus, we may assume that $a = 1$.  Since $f \ge 4$ and $|A:L| = 2^{f - 2a}$, we conclude that $L < A$.  Let $H/N$ be a Frobenius complement for $T/N$.  Since $H$ stabilizes $\theta$ and $A$ is determined by $\theta$, we see that $H$ will normalize $A$.  Because $T/N$ is a Frobenius group, $H/L$ will have a regular orbit on $\Irr(A/N)$.  Applying Gallagher's theorem, this yields a regular orbit for $H/N$ on $\Irr(A|\theta)$, and thus, a regular orbit for $H/N$ on $\Irr(L| \theta)$.  In particular, there exists a character $\hat\theta \in \Irr(L| \theta)$ whose stabilizer in $I$ is $L$.  It follows that $\hat\theta^I \in \Irr(I | \theta)$, and so, $\hat\theta^G \in \Irr(G| \theta)$.  We now have $(q + 1)(q - 1) 2 \theta (1) = |G:L| 2 \theta (1) = \hat\theta^G (1) \in \cd(G| \theta)$, and this proves that $(q^2-1)\theta(1)$ divides all degrees in $\cd(G|\theta)$.

In summary, we deduce that $|\cB_2|\le 3+2=5.$
\end{proof}

\begin{lemma}\label{lem:L2odd} Assume the hypothesis of Theorem \ref{th:L2q} and suppose that $q=p^f\ge 13$ with $p>2.$ Then $|\cB_2|\le 9$.
\end{lemma}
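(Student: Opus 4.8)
The plan is to mirror the structure of the even-characteristic argument in Lemma~\ref{lem:L2even}, but now using the odd-characteristic maximal subgroup list of Theorem~\ref{maximalodd}. Let $\theta\in\cA_2$ with inertia group $I=I_G(\theta)$. Since $M\nleq I$, we have $I\le MI\le G$, so $|G:I|$ is divisible by $|M:T|$, where $M\cap I\le T\le M$ and $T/N$ is maximal in $M/N\cong\PSL_2(q)$. The idea is to go through each maximal subgroup type in Theorem~\ref{maximalodd} and either show $|M:T|$ is divisible by three primes (forcing $|\cd(G|\theta)|=1$), or, when the index has at most two prime divisors, analyze the extendibility of $\theta$ up $M\cap I$ to show that every degree in $\cd(G|\theta)$ is forced to lie in a short controlled list.

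First I would dispose of the small-index cases. For the dihedral subgroups $\mathrm D_{q\pm1}$ and the subfield subgroups $\PSL_2(q_0)$ or $\PGL_2(q_0)$, the index $|M:T|$ is $q(q\mp1)/2$ or a polynomial in $q_0$, and since $q\ge13$ with $q$ odd one checks (using $\omega(q-\epsilon)\ge 3$ from the remark that $q-\epsilon$ is divisible by at least three primes) that these indices are divisible by three primes, killing those contributions. The genuinely delicate type is the point stabilizer $T/N\cong C_p^f\rtimes C_{(q-1)/2}$, a Frobenius group with elementary abelian kernel $L/N\cong C_p^f$ of order $q$. Here $|M:T|=q+1$ which may have only two prime divisors, so I must analyze $\cd(G|\theta)$ more carefully. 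As in the even case, the possible shapes of $(M\cap I)/N$ are a subgroup of the cyclic complement, a subgroup of the kernel, or a Frobenius group $C_p^e\rtimes C_d$; the goal is to show every degree of $\cd(G|\theta)$ is divisible by $q+1$ or by $(q^2-1)/2$-type quantities, yielding only a bounded handful of surviving degrees.

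The core technical step, exactly parallel to Lemma~\ref{lem:L2even}, is the fully-ramified analysis when $L\le M\cap I$. I would invoke Theorem~2.7 of \cite{Isaacs73} to attach to $\theta$ a canonical subgroup $A$ with $N\le A\le L$ over which $\theta$ extends and relative to which the characters are fully ramified, giving $|L:A|=p^{2a}$ so that $p^a$ divides every degree in $\cd(L|\theta)$. Because $f\ge 2$ generically forces $L<A$ when $a$ is small, the Frobenius complement $H/N$ normalizes the $\theta$-canonical $A$ and hence has a regular orbit on $\Irr(A|\theta)$; Gallagher's theorem then produces a character $\hat\theta\in\Irr(L|\theta)$ with stabilizer $L$ in $I$, so $\hat\theta^G\in\Irr(G|\theta)$ has degree $|G:L|\theta(1)$, which is divisible by $(q^2-1)/2\cdot\theta(1)$. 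This forces $|\cd(G|\theta)|$ to be very small in the remaining cases.

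The main obstacle I anticipate is the bookkeeping over the extra outer factor: because $q$ is odd, $|G:M|$ can be $1,2,f$, or $4$, and the diagonal automorphism splits the unipotent radical's complement differently than in the even case, so the ``$\pm$'' distinctions between $q-1$ and $q+1$ (together with the parity $\epsilon$ with $q\equiv\epsilon\pmod4$) must be tracked to see which of $q-1,q+1,(q\pm1)/2$ can survive with only two prime divisors. I expect the final accounting to collect contributions of the form $(q-\epsilon)/2$, one or two degrees divisible by $q+\epsilon$, and a couple of ``index'' degrees, summing to the claimed bound $|\cB_2|\le 9$. The sporadic small cases $q=11,13$ where the maximal subgroup list in Theorem~\ref{maximalodd} must be replaced by direct reference to \cite{Atlas} will need to be checked separately, but those are finite verifications rather than structural difficulties.
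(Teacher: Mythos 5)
Your outline correctly identifies the overall strategy (run through the maximal subgroups of $M/N$ from Theorem~\ref{maximalodd}, kill the cases where $|M:T|$ has three prime divisors, and analyze the survivors), and your treatment of the point stabilizer $C_p^f\rtimes C_{(q-1)/2}$ is in the right spirit, though the paper actually handles it with Glauberman's lemma plus Gallagher rather than the fully-ramified machinery of \cite{Isaacs73} that it uses in the even case. However, there are genuine gaps in which cases you dismiss. First, the dihedral subgroup $\mathrm{D}_{q-\epsilon}$ has index $q(q+\epsilon)/2$ in $M/N$, and this can have exactly two prime divisors: the fact that $\omega(q-\epsilon)\ge 3$ only disposes of $\mathrm{D}_{q+\epsilon}$ (whose index is $q(q-\epsilon)/2$), while for $\mathrm{D}_{q-\epsilon}$ one can have $q=p$ prime and $(q+\epsilon)/2$ prime (e.g.\ $q=13$, index $13\cdot 7$). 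The paper's Case (4) devotes a full paragraph to this situation and it contributes up to three degrees to $\cB_2$. Similarly, the subfield subgroup $\PGL_2(q_0)$ with $q=q_0^2$ has index $q_0(q+1)/2$, which can also have only two prime divisors (e.g.\ $q_0=5$, index $5\cdot 13$), so it cannot be discarded on index grounds; the paper's Case (2) needs a separate divisibility argument there.

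The more serious omission is that you never address the maximal subgroups $\Alt_5$, $\Alt_4$, and $\Sym_4$ from items (5)--(7) of Theorem~\ref{maximalodd}. These are not confined to small $q$: for every prime $p\equiv\pm1\pmod{10}$ the group $\PSL_2(p)$ has maximal $\Alt_5$ subgroups, and the indices $q(q^2-1)/120$, $q(q^2-1)/24$, $q(q^2-1)/48$ can each have only two prime divisors when $q=p$ and the relevant cofactor is prime. In the paper's final accounting these three cases contribute up to five degrees to $\cB_2$ (when $q=p$), and the total of $9$ is reached precisely as $4$ (from Cases (1)--(4)) plus $5$ (from Cases (5)--(7)). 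Without these cases your proposed bookkeeping cannot produce the stated bound, and the closing sentence of your proposal, which predicts contributions only of the shape $(q-\epsilon)/2$ and multiples of $q+\epsilon$, does not match what actually survives.
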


\begin{proof} Let $\theta\in\cA_2$ and let $I=I_G(\theta).$ Then $M\nleq I$ so $I\leq MI\le G$ and thus $|G:I|$ is divisible by $|M:T|$ where $J:=M\cap I\le T\leq M$ such that $T/N$ is a maximal subgroup of $M/N\cong \PSL_2(p^f)$ with $f\ge 1.$

\medskip
(1) $T/N\cong C_p^f:C_{(p^f-1)/2}.$ We claim that $p(q+1)$ divides every degree in $\cd(G|\theta)$ and since $p(q+1) $ is divisible by three primes, we have $|\cd(G|\theta)|\leq 1.$

We have $|M:T| = q + 1$.   Recall that either $\omega (q - 1) \ge 3$ or $\omega (q + 1) \ge 3$ and both $q-1$ and $q+1$ divide $(q-1)(q+1)/2$ and $q\pm 1,f(q\pm 1)$ or $2(q\pm 1)$ are in $\cd(G/N)$.  Thus, by the two-prime hypothesis, we cannot have $(q-1)(q+1)/2$ dividing some degree in $\cd(G| \theta)$.  Let $N_1/N \leq T/N$ with $N_1/N \cong  C_p^f$.  If $J \cap N_1< N_1$, then $p$ divides every degree in $\cd(G| \theta)$, and the claim follows.  Thus, we may assume $N_1 \le J$.  If $J = N_1$, then $|T:J| = (q-1)/2$ and $(q-1)(q+1)/2$ divides the degrees in $\cd(G | \theta)$, a contradiction.  Thus  $N_1 < J$.  Notice that $J/N$ is a Frobenius group with Frobenius kernel $N_1/N$.  Let $H/N$ be a Frobenius complement for $J/N$.  If $\theta$ does not extend to $N_1$, then $p$ will divide every degree in $\cd(G| \theta)$.  We assume that $\theta$ extends to $N_1$.  By Glauberman's lemma \cite[Lemma 13.8]{Isaacs} and \cite[Corollary 13.9]{Isaacs}, there will be a unique extension $\hat{\theta} \in \Irr(N_1| \theta)$ that is $H$-invariant.  It follows that $\hat\theta$ is $J$-invariant, and since $J/N_1$ is cyclic, $\hat\theta$ extends to $J$.  This implies that $\theta$ extends to $J$, and by Gallagher's theorem we have $|J:N_1| \in \cd(J | \theta)$.  This implies that $|M:N_1| = (q-1)(q+1)/2$ divides some degree in $\cd(G \mid \theta)$, a contradiction.

\medskip
(2) $T/N \cong \PGL_2(q_0)$, where $q = q_0^2$. We claim that   every degree in $\cd(G|\theta)$ is divisible by $p(q+1)$ or $p^2(q+1)/2.$ Clearly, $q_0>3$.
 Note that this implies $f$ is even and $|M:T|  =  {q_0 (q+1)}/2$.  Observe that $\omega (|M:T|) \geq 2$.

 Observe next that $(q-1)(q+1)q_0/4$ is divisible by either $q-1$ or $q+1$ and thus  $\cd(G|\theta)$ has no degree divisible by $q_0(q^2-1)/4$ as $\cd(G/N)$ has degrees $q\pm 1,2(q\pm 1)$ or $f(q\pm 1).$

Let $N_1/N\lhd T/N$ with $N_1/N\cong \PSL_2(q_0)$.
If $N_1\leq J $, then $\{ q_0 - 1, q_0 + 1 \} \cdot \theta (1) \subseteq \cd(N_1| \theta)$. Recall that $N_1\unlhd J\unlhd I.$ Now if $\cd(I|\theta)$ has two distinct degrees, one divisible by $q_0-1$ and another divisible by $q_0+1$, then  $\cd(G| \theta)$ has two distinct degrees divisible by $q_0(q-1)$, violating the two-prime hypothesis. Thus $\cd(I|\theta)$ has a degree divisible by $(q_0-1)(q_0+1)/2$ which is $\lcm(q_0-1,q_0+1).$ Hence $\cd(G|\theta)$ has a degree divisible by $(q-1)(q+1)q_0/4$ which contradicts the observation above.

Assume $N_1\nleq J.$ Then $p$ divides $|JN_1:J|=|N_1:N_1\cap J|$ or $|JN_1:J| = q_0 + 1$, which implies that every degree in $\cd(G | \theta)$ is divisible by either $q_0(q+1) (q_0 + 1)/2$ or $q_0(q+1) p/2$. In particular, every degree in $\cd(G|\theta)$ is divisible by $p(q+1)$ or $p^2(q+1)/2.$

 \medskip
 (3) $T/N\cong \PSL_2(q_0)$, with $q=q_0^r$ where $r$ is an odd prime.

 We see $|M:T|=q_0^{r-1}(q^2-1)/(q_0^2-1)$. Since $r\ge 3$, $|M:T|$ is divisible by at least three distinct primes and thus $|\cd(G|\theta)|\leq 1.$ Notice that  $q_0^{r-1}(q^2-1)/(q_0^2-1)$ and $p^2(q+1)/2$ have at least three prime divisors in common.

 \medskip
 (4) $T/N\cong \textrm{D}_{q-\delta}$ with $\delta=\pm 1\ge 1$. We have $|M:T|=q(q+\delta)/2.$

 If $q(q+\delta)/2$ is divisible by at least three primes, then $|\cd(G|\theta)|=1.$ So, assume $q(q+\delta)/2$ is divisible by at most two primes. Then $q=p$ is prime, $\delta=\epsilon$ and $(q+\epsilon)/2$ is prime, where $q\equiv \epsilon$ (mod $4$). It follows that $G/N\cong \PSL_2(q)$ or $\PGL_2(q)$ and so $I/N\leq H/N$ where $H/N\cong \textrm{D}_{2n}$ with $n=(q-\epsilon)/2$ or $q-\epsilon,$ respectively. Since $q\ge 13,$  $H/N$ is nonabelian and $\cd(H/N)=\{1,2\}.$

 If $\theta$ extends to $H,$ then $\cd(G|\theta)=\{q(q+\epsilon)/2,q(q+\epsilon)\}\cdot\theta(1)$ which implies that $\theta(1)=1.$

  If $\theta$ is $H$-invariant but not extendible to $H,$ then every degree in $\cd(G|\theta)$ is divisible by $q(q+\epsilon)$.

  Finally, assume that $\theta$ is not $H$-invariant. Then  $I/N$ is a proper subgroup of $H/N.$ It follows that $I/N$ is cyclic whose order divides $n$ or $I/N\cong \textrm{D}_{2d}$ with $d\mid n.$ If $|H:I|$ is even, then every degree in $\cd(G|\theta)$ is divisible by $q(q+\epsilon)$. So, assume $|H:I|$ is odd, which implies that either $I/N$ is a nonabelian dihedral group of order $2d$ with $n\neq d\mid n$ or $I/N\cong C_2$ or $C_2^2.$ In all cases, $|G:I|$ is divisible by at least three primes, so $|\cd(I|\theta)|=1.$ If $I/N$ is a nonabelian dihedral group, then $\theta$ is not extendible to $I$ and so $\cd(G|\theta)=\{q(q+\epsilon)\}\theta(1)$ as $\cd(I|\theta)=\{2\theta(1)\}.$ For the remaining two cases, we see that the degree in $\cd(G|\theta)$ is divisible by $q(q-1)(q+1)/8.$

  \medskip
 (5) $T/N\cong \Alt_5$ where $q\equiv \pm 1$ $({\rm mod}\ 10)$, with either $q $ prime, or $q = p^2$ and $p\equiv \pm 3$ $({\rm mod}\ 10)$. Then $|M:T|=q(q^2-1)/120$ and $\gcd(q,10)=1.$ Observe that if $p$ is any odd prime, then $p^2\equiv 1$ (mod $8$) so $16\mid p^4-1.$

 Since $q\ge 13,$ we can assume $q\ge 19.$  It follows that $q\neq 3,9$ so $3\nmid q$ and thus $120\mid q^2-1.$ Write $q^2-1=120 t$ for some integer $t>1.$ Hence $|M:T|=qt>q.$ If $qt$ is divisible by three primes, then $|\cd(G|\theta)|\le 1.$ So, assume now that $qt$ is divisible by at most two primes which implies that $q=p$ and $t$ is a prime. Hence $G/N\cong \PSL_2(p)$ or $\PGL_2(p).$ From \cite[Theorem 3.5]{Giudici}
, we can see that the second case cannot occur.  Hence $G/N\cong \PSL_2(p)$ and $I/N\leq T/N\cong \Alt_5.$

If $\theta$ is $T$-invariant, then either $\cd(G|\theta)=\{qt,3qt,4qt,5qt\}\theta(1)$ or $\{2qt,4qt,6qt\}\theta(1).$ Clearly, the second case cannot hold and if the first case holds, then $\theta(1)=1.$

 If $\theta$ is not $T$-invariant, then $|T:I|$ is divisible by $5,6$ or $10.$ So $|G:I|$ is divisible by $5qt,6qt$ or $10qt$ which are all divisible by at least three primes. So $|\cd(G|\theta)|\le 	1$ and every degree in $\cd(G|\theta)$ is divisible by $5qt$ or $6qt.$

  \medskip
 (6) $T/N\cong \Alt_4$ where $q = p \equiv \pm 3$ $({\rm mod}\ 8)$ and $q\not\equiv \pm 1$ (mod $10$). Then $|M:T|=q(q^2-1)/24$ and since $q>7$ is odd, we have $24\mid q^2-1$ so $(q^2-1)/24=r$ for some integer $r>1.$ Notice that $G/N\cong \PSL_2(q)$ or $G/N\cong\PGL_2(q).$

 If $qr$ is divisible by at least three primes, then $|\cd(G|\theta)|\le 1.$ So, assume $r$ is a prime.

Assume first that $G/N\cong\PGL_2(q)$ and let $I/N\leq H/N$ where $H/N\cong \Sym_4.$ In this case, $H/N\cap M/N\cong T/N.$ If $\theta$ is $H$-invariant, then $\cd(G|\theta)=\{qr,2qr,3qr\}\theta(1)$ or $\{2qr,4qr\}\theta(1).$ The latter cannot occur and if the former case occurs, then $\theta(1)=1.$ If $\theta$ is not $H$-invariant, then $|H:I|$ is divisible by $2,3$ or $4$ so $|G:I|$ is divisible by $2qr$ or $3qr$. Hence $|\cd(G|\theta)|\le 1$.

Assume next that  $G/N\cong\PSL_2(q)$.  If $\theta$ is $T$-invariant, then $\cd(G|\theta)=\{qr,3qr\}\theta(1)$ or $\{2qr\}\theta(1).$  If the former case occurs, then $\theta(1)=1.$ If $\theta$ is not $T$-invariant, then $|T:I|$ is divisible by $3$ or $4$ so $|G:I|$ is divisible by $3qr$ or $4qr$. Hence $|\cd(G|\theta)|\le 1$.

  \medskip
 (7) $T/N\cong \Sym_4$ where $q=p \equiv \pm 1$   $({\rm mod}\ 8)$. Then $|M:T|=q(q^2-1)/48.$ Observe that $16\mid q^2-1.$  If $q=3^f>9,$ then $f\ge 4$ so
 $q(q^2-1)/48$ is divisible by at least three primes and thus $|\cd(G|\theta)|\le 1.$ If $3\nmid q,$ then $q^2-1$ is divisible by $3$ and hence $q^2-1=48s$ for some integer $s>1.$ (Assume $q>7$). If $|M:T|=qs$ is divisible by  at least three primes, then $|\cd(G|\theta)|\le 1.$ So, assume $qs$ is divisible by two primes which implies that both $q$ and $s$ are primes. From \cite[Theorem 3.5]{Giudici}, we have $G/N=M/N\cong\PSL_2(p)$ and hence $I/N\leq T/N\cong \Sym_4$.

 If $\theta$ is $T$-invariant, then $\cd(G|\theta)=\{qs,2qs,3qs\}\theta(1)$ or $\{2qs,4qs\}\theta(1)$. Obviously, the second case cannot occur and if the first case holds, then $\theta(1)=1.$ If $\theta$ is not $T$-invariant, then $|G:I|$ is divisible by $2qs,3qs$ or $4qs$ and thus $|\cd(G|\theta)|\leq 1$ and the degree in $\cd(G|\theta)$ is divisible by either $2qs$ or $3qs$.

In summary, the first three cases contribute at most three degrees in $\cB_2.$ Case (4) contributes one degree unless $q=p$ is prime and $(q+\epsilon)/2$ is prime; in this case it contributes at most three degrees $q(q+\epsilon)/2,q(q+\epsilon)$ or a degree divisible by $q(q^2-1)/8.$

For the last three cases, we see that cases $(5)$ and $(6)$ cannot occur simultaneously; the similar observation holds for cases $(6)$ and $(7)$. Moreover, if cases $(5)$ and $(7)$ occur at the same time, then both $t$ and $s$ cannot be primes simultaneously. Therefore, the maximum contribution of the last three cases to $\cB_2$ is $5$ if $q=p$ and $2$ if $q>p$.

 Assume first that $q=p.$ Then   cases (2) and $(3)$ do not occur, so the first four cases contribute at most 4 degrees to $\cB_2$ and the last four cases contribute at most $5$ degrees, so $|\cB_2|\le 4+5=9.$

 Assume that $q$ is not a prime. Then the first four cases contribute at most $6$ degrees to $\cB_2$ and the last three cases contribute at most 1 degree (in case (5) only), hence $|\cB_2|\le 6+1=7.$ Therefore $|\cB_2|\le 9$ in all cases.
\end{proof}

\begin{lemma}\label{lem:L2(7)} Theorem \ref{th:L2q} holds for $q=7$.
\end{lemma}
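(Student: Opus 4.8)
The plan is to run the same $\cA_1/\cA_2$ bookkeeping as in the previous lemmas, specializing the arithmetic to $q=7$. First I would record the data: since $7$ is prime, $\Aut(\PSL_2(7))=\PGL_2(7)$, so $G/N\cong\PSL_2(7)$ or $\PGL_2(7)$, $|G:M|\le 2$, the Schur multiplier of the socle is $C_2$, and $\cd(G/N)$ is $\{1,3,6,7,8\}$ or $\{1,6,7,8\}$, so $|\cd(G/N)|\le 5$. The decisive arithmetic feature is that $q-1=6=2\cdot 3$, $q=7$, and $q+1=8=2^3$; in particular $8\in\cd(G/N)$ has three prime factors, and this single degree does most of the work through the clash mechanism, since any degree $d$ with $8\mid d$ and $d\ne 8$ has $\gcd(d,8)=8$, violating the hypothesis. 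Because $q=7\ne 9$, Lemma \ref{lem:L2MInvariant} gives $|\cB_1|\le 4$, so it remains only to bound $\cB_2$; as $5+4+|\cB_2|\le 19$ needs $|\cB_2|\le 10$, any reasonably tight bound will do.

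For $\cB_2$, take $\theta\in\cA_2$, put $I=I_G(\theta)$, and choose $M\cap I\le T\le M$ with $T/N$ maximal in $M/N$, so that every degree of $\cd(G|\theta)$ is divisible by $|M:T|$. By the Atlas, the maximal subgroups of $\PSL_2(7)$ are $\Sym_4$ (index $7$, two classes) and $7{:}3$ (index $8$). If $T/N\cong 7{:}3$ then every degree in $\cd(G|\theta)$ is divisible by $|M:T|=2^3$, hence equals $8$ by the clash with $8\in\cd(G/N)$; thus this branch contributes nothing new to $\cB_2$.

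So assume $T/N\cong\Sym_4$, where every degree of $\cd(G|\theta)$ is divisible by $|M:T|=7$, and split according to $J=M\cap I$. Passing to a character triple I would write $\cd(M|\theta)=7\,\theta(1)\cdot\cd_\beta(J/N)$ for the cocycle $\beta$ attached to $\theta$. If $J<T$ (so that $J/N$ is $\Alt_4$, $\mathrm{D}_8$, $\Sym_3$, or smaller, of index $14,21,28,\ge 42$ in $M/N$), or if $\beta$ is nontrivial (the ramified case, where the projective degrees $\cd_\beta(\Sym_4)=\{2,4\}$ produce $\{14,28\}\theta(1)$), then two of the resulting degrees already have a gcd divisible by two primes, forcing $\theta(1)=1$; moreover, the branches of index $\ge 28$ give a single degree that is pinned down, and usually excluded, by the clash with $8$. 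A short computation then shows that, with $\theta(1)=1$, the only new degrees these branches can contribute lie in $\{14,21,28,42\}$. In the case $|G:M|=2$ the same analysis applies with the degrees possibly doubled, and the clash with $8$ together with the within-family gcds controls the extra factor of $2$.

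The main obstacle is the remaining \emph{unramified} case $J=T\cong\Sym_4$ with $\theta$ extending to $T$, where $\cd(M|\theta)=\{7,14,21\}\theta(1)$. Here the worst within-family gcd is $7\theta(1)$, whose number of prime factors is only $1+\omega(\theta(1))$, so the two-prime hypothesis \emph{by itself} permits $\theta(1)$ to be a prime, and the clash with $8=q+1$ forces only the $2$-part of $\theta(1)$ to be at most $2$. This is exactly the small-$q$ phenomenon flagged in the introduction: the index $7$ carries a single prime, so $\theta(1)=1$ is not immediate. My plan is to rule out $\theta(1)>1$ in this branch: the $2$-part is already controlled, and for the odd part I would descend to a character lying under $\theta$ on a further normal subgroup of $N$, or exploit the two classes of $\Sym_4$ and the action of $M/N$ on the $7$-element orbit of $\theta$, to produce an additional constituent whose degree clashes with $14\theta(1)$ or $21\theta(1)$ and thereby forces $\theta(1)=1$. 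Once this is done the unramified branch contributes only $\{14,21\}$, so altogether $\cB_2\subseteq\{14,21,28,42\}$ and $|\cB_2|\le 4$, giving $|\cd(G)|\le 5+4+4=13\le 19$. I expect the elimination of $\theta(1)>1$ in this unramified $\Sym_4$ case to be the one genuinely delicate point of the proof.
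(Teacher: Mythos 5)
Your overall architecture matches the paper's: bound $\cB_1$ via Lemma \ref{lem:L2MInvariant}, then run through the maximal overgroups of $I/N$, using the clash with $8=q+1\in\cd(G/N)$ to kill the $7{:}3$ branch and analyzing the $\Sym_4$ branch by hand. You have also correctly located the crux. But the step you defer --- forcing $\theta(1)=1$ in the unramified case where $\theta\in\cA_2$ is $H$-invariant for $H/N\cong\Sym_4$ and extends, so that $\cd(G|\theta)=\{7,14,21\}\cdot\theta(1)$ --- is a genuine gap, not a routine verification. As you yourself observe, the within-family gcds only control the $2$-part of $\theta(1)$, and the clash with $\cd(G/N)=\{1,3,6,7,8\}$ gives nothing further: $\gcd(7r,7)=7$, $\gcd(21r,21)=21$, etc., are all fine for an odd prime $r$. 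Your proposed remedies (descending to a normal subgroup of $N$, or using the two conjugacy classes of $\Sym_4$) are not arguments; there is no a priori normal subgroup of $N$ to descend to, and the two classes of $\Sym_4$ in $\PSL_2(7)$ are fused by the outer automorphism, which does not produce the extra constituent you want. The paper's own proof confirms that this cannot be ruled out: it explicitly allows ``$\theta(1)=1$ or prime'' in this branch and instead \emph{counts} the damage, asserting that $\theta(1)$ can take at most two distinct prime values (using $\theta(1)\in\cd(N)$ with $N$ solvable), which leads to the weaker but sufficient bound $|\cB_2|\le 11$ and $|\cd(G)|\le 18<19$ in the $\PSL_2(7)$ case.

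So your claimed conclusion $|\cB_2|\le 4$ is unsubstantiated, and the theorem survives only because the target $19$ is generous: once you replace ``force $\theta(1)=1$'' by ``bound the number of admissible values of $\theta(1)$,'' your bookkeeping goes through along the lines of the paper. Two smaller points: in the $\PGL_2(7)$ case the paper does not reduce to $M\cap I$ and double; it works directly with the maximal subgroups $(C_7{:}C_3){:}C_2$, $\mathrm{D}_{16}$, $\mathrm{D}_{12}$ of $\PGL_2(7)$ not containing the socle, and the $\mathrm{D}_{16}$ branch (index $3\cdot 7$) needs the dihedral-stabilizer analysis from Case (4) of Lemma \ref{lem:L2odd}, which your ``same analysis with degrees doubled'' sketch does not capture. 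Also note the Schur multiplier of $\PSL_2(7)$ being $C_2$ plays no role in the $\cA_2$ analysis as you set it up, since the relevant projective degrees are handled through the character-triple cocycle on $T/N$, which is what both you and the paper actually use.
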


\begin{proof} From \cite{GAP}, we see that $G/N\cong \PSL_2(7)$ or $\PGL_2(7)$. Moreover, the maximal subgroups of $\PSL_2(7)$ are isomorphic to either $C_7:C_3$ or $\Sym_4$ and maximal subgroups of $\PGL_2(7)$ which does not contain $\PSL_2(7)$ are isomorphic to either $(C_7:C_3):C_2,\textrm{D}_{16}$ or $\textrm{D}_{12}.$

(a) Assume first that $G/N\cong\PGL_2(7).$

From Lemma \ref{lem:L2MInvariant}, we have that $|\cB_1|\le 6.$ We next claim that $|\cB_2|\le 3$ and since $|\cd(G/N)|=4,$ we obtain that $|\cd(G)|\le |\cB_1|+|\cB_2|+|\cd(G/N)|\le 6+3+4=13<19.$

We now show that $|\cB_2|\le 3.$  Let $\theta\in\cA_2$ and let $I=I_G(\theta).$ Then $M/N\nleq I/N$ so $I/N\neq G/N$. Let $H/N$ be a maximal subgroup of $G/N$ containing  $I/N$.

If $H/N\cong (C_7:C_3):C_2$, then $|G:H|=8$ so every degree in $\cd(G|\theta)$ is divisible by $8$ and since $8\in\cd(G/N),$ this contributes nothing to $\cB_2$.  If $H/N\cong \textrm{D}_{12}$, then $|G:H|=2^2\cdot 7$ so $|\cd(G|\theta)|\le 1.$

Assume that $H/N\cong \textrm{D}_{16}.$ Then $|G:H|=3\cdot 7.$ Since $H/N$ is nonabelian, we can use the argument as in Case (4) of Lemma \ref{lem:L2odd} to see that $\cd(G|\theta)=\{7\cdot 3,7\cdot 6\}$ or $|\cd(G|\theta)|=1$ and the unique degree in $\cd(G|\theta)$ is divisible by  $7\cdot 2\cdot 3.$ Therefore, $|\cB_2|\le 3.$

\smallskip
(b) Assume $G/N\cong\PSL_2(7).$ We have $\cd(G/N)=\{1,3,6,7,8\}$.

Let $\theta\in\cA_1.$ Then $\theta$ is $G$-invariant and thus by using \cite{GAP}, $\cd(G|\theta)=\{1,3,6,7,8\}\cdot \theta(1)$ or $\{4,6,8\}\cdot \theta(1).$ Since $8\in\cd(G),$ we deduce that $\theta(1)=1.$ Hence $\cB_1$ contains possibly the degree $4$  and thus $|\cB_1|\le 1.$

 Let $\theta\in\cA_2$ and let $I=I_G(\theta).$ Then $M/N\nleq I/N$ so $I/N\neq G/N$. Let $H/N$ be a maximal subgroup of $G/N$ containing  $I/N$.

 As in the previous case, if $H/N\cong C_7:C_3$, then every degree in $\cd(G|\theta)$ is divisible by $8.$ So, assume $H/N\cong\Sym_4.$ Then $|G:H|=7.$

Assume that $\theta$ is $H$-invariant. Then $\cd(G|\theta)=\{7,7\cdot 2,7\cdot 3\}\theta(1)$ or $\{2\cdot 7,4\cdot 7\}\theta(1)$. If the latter case holds, then $\theta(1)=1$. If the first case holds, then $\theta(1)=1$ or prime. Since $\theta(1)\in\cd(N)$ and $N$ is solvable, $\theta(1)$ can take at most two distinct prime values.

Assume next that $\theta$ is not $H$-invariant. Let $K/N$ be a maximal subgroup of $H/N$ containing $I/N.$
Then $K/N$ is isomorphic to $\Sym_3$, $\textrm{D}_8$, or $\Alt_4$ with indices $|H:K|$ equal to $2^2$, $3$, or $2$, respectively.  Suppose $I = K$.  If $K/N \cong \Sym_3$, then $\theta$ extends to $I$, so $\cd(G \mid \theta) = \{ 1, 2 \} \cdot 2^2 \cdot 7 \cdot \theta(1)$, a contradiction since $2^3 \in \cd(G/N)$.  If $K/N \cong \textrm{D}_8$, then $\cd(G | \theta)$ is either $\{ 1, 2 \} \cdot 3 \cdot 7 \theta(1)$ or $\{ 2 \cdot 3 \cdot 7 \theta (1) \}$. In the first case, we have $\theta (1) = 1$.  If $K/N \cong \Alt_4$, then either $\cd(G | \theta) = \{ 1, 3 \} \cdot 2 \cdot 7 \theta (1)$ and in this case $\theta (1) = 1$, or $\cd(G| \theta) = \{2^2 \cdot 7 \theta(1) \}$.  Finally, if $I \lneq K$, we see that either $2 \cdot 3$ or $2^3$ divides $|H:I|$.  Recall that $2^3 \in \cd(G)$.  If $2^3$ divides $|H:I|$, then $2^3 \cdot 7$ divides every degree in $\cd(G| \theta)$ which contradicts the two-prime hypothesis.  So we have $2 \cdot 3\mid |H:I|$, and so, $2 \cdot 3 \cdot 7 \theta (1)$ divides every degree in $\cd(G | \theta)$.

Since $\{7,8\}\subseteq \cd(G/N),$ we see that $|\cB_2|\le 11.$

Therefore, $|\cd(G)|\leq |\cB_1|+|\cB_2|+|\cd(G/N)|\le 1+11+6=18<19.$
\end{proof}

\begin{lemma}\label{lem:L2(11)} Theorem \ref{th:L2q} holds for $q=11$.
\end{lemma}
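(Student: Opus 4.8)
The plan is to rerun the Clifford-theoretic scheme of the preceding lemmas, specialised to $q=11$. Since $11$ is prime we have $f=1$, so the only nontrivial outer automorphism is the diagonal one and $\Out(\PSL_2(11))\cong C_2$; hence $G/N$ is either $\PSL_2(11)$ or $\PGL_2(11)$. Reading off Lemma~\ref{lem:White} (with $\epsilon=-1$) gives $\cd(\PSL_2(11))=\{1,5,10,11,12\}$ and $\cd(\PGL_2(11))=\{1,10,11,12\}$, both of which satisfy the two-prime hypothesis, so $|\cd(G/N)|\le 5$. Lemma~\ref{lem:L2MInvariant} applies, as $q=11\neq 9$, and gives $|\cB_1|\le 4$. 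Since $|\cd(G)|\le|\cd(G/N)|+|\cB_1|+|\cB_2|$, it suffices to prove $|\cB_2|\le 10$.

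First I would reduce $\cB_2$ to degrees divisible by $11$. For $\theta\in\cA_2$ put $J=M\cap I_G(\theta)$; then $J<M$ and every degree in $\cd(G|\theta)$ is divisible by $|M:J|$. The maximal subgroups of $M/N\cong\PSL_2(11)$ are (from the Atlas) $\Alt_5$ in two classes of index $11$, the point stabiliser $C_{11}\rtimes C_5$ of index $12$, $\textrm{D}_{12}$ of index $55$, and $\textrm{D}_{10}$ of index $66$; for $\PGL_2(11)$ one adds an $\Sym_4$ of index $55$. Because $|M/N|=2^2\cdot 3\cdot 5\cdot 11$ and $11$ divides the order of no maximal subgroup except the point stabiliser, $|M:J|$ fails to be divisible by $11$ exactly when $J$ contains a Sylow $11$-subgroup, and then (by Dickson) $J/N\le C_{11}\rtimes C_5$, whence $12\mid|M:J|$. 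In that case every degree of $\cd(G|\theta)$ is a multiple of $12$; as $12\in\cd(G/N)$ the two-prime hypothesis forces such a degree to equal $12$, contributing nothing new. Hence every member of $\cB_2$ is divisible by $11$.

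It then remains to bound $\cD_{11}:=\{d\in\cd(G):11\mid d,\ d\neq 11\}\supseteq\cB_2$. The decisive tool is that for any fixed integer $n$ with $\omega(n)\ge 3$ at most one degree of $G$ is divisible by $n$, since two such would have $\gcd$ divisible by $n$. I would combine this with two further facts: no $d\in\cD_{11}$ is divisible by $12$ (else $\gcd(d,12)=12$), and for each $\theta\in\cA_2$ the set $\cd(G|\theta)$ is a bundle $11\cdot\{\text{degrees of a section of }\PSL_2(11)\}\cdot\theta(1)$ whose members force $\theta(1)$ to be $1$ or a prime. Running through the maximal subgroups: an $\Alt_5$ with $\theta$ invariant produces the bundle $\{11,33,44,55\}\theta(1)$, or $\{22,44,66\}\theta(1)$ when $\theta$ does not extend, each containing a multiple of $44$; since there is at most one degree divisible by $44$, at most one such bundle occurs. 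The non-invariant $\Alt_5$ cases together with the dihedral and $\Sym_4$ cases contribute degrees pinned to the remaining $\omega=3$ slots $66,110,165$ (the slot $132$ being excluded by the $12$-constraint), alongside the $\omega=2$ degrees drawn from $22,33,55$. Tallying these admissible degrees, with the mutual exclusions the slot principle forces, yields $|\cB_2|\le 10$ and hence $|\cd(G)|\le 19$.

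The hard part is exactly the $\Alt_5$ subgroups of index $11$: because $11$ is a single prime, their invariant characters can simultaneously contribute as many as four distinct degrees divisible by $11$, and allowing $\theta(1)>1$ replaces these by a parallel family, for instance $\{77,231,308,385\}$ when $\theta(1)=7$, each of whose members is individually consistent with the two-prime hypothesis. The real work is to show, using the ``one degree per $\omega\ge 3$ slot'' principle and the exclusions it creates among competing bundles (notably that any two bundles arising from an extending $\Alt_5$ collide in the unique multiple of $44$), that these families cannot accumulate past the stated bound. This is the $q=11$ analogue of the delicate $\Sym_4$ analysis already carried out for $q=7$ in Lemma~\ref{lem:L2(7)}, and I expect it to be the main obstacle.
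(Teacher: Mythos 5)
Your setup matches the paper's: the same decomposition $\cd(G)=\cd(G/N)\cup\cB_1\cup\cB_2$, the same case analysis over maximal subgroups of the socle, and the same ``at most one degree divisible by any fixed $n$ with $\omega(n)\ge 3$'' principle. The reduction showing every member of $\cB_2$ is divisible by $11$ (because the only maximal subgroup of $\PSL_2(11)$ of order divisible by $11$ is $C_{11}\rtimes C_5$, of index $12$, and $12\in\cd(G/N)$ kills that branch) is sound and is a reasonable reorganization of what the paper does case by case. However, the proof is not complete: the decisive step, the tally giving $|\cB_2|\le 10$, is asserted rather than carried out, and you yourself flag the crux --- the $M$-invariant restriction to an index-$11$ copy of $\Alt_5$ producing bundles $\{11,33,44,55\}\theta(1)$ or $\{22,44,66\}\theta(1)$ with $\theta(1)$ possibly $>1$ --- as ``the main obstacle'' and leave it unresolved. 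The paper closes exactly this point with a short argument you should supply: $\gcd(33\theta(1),55\theta(1))=11\theta(1)$ forces $\theta(1)\in\{1\}\cup\{\text{primes}\}$, and since $2^2\cdot 11\cdot\theta(1)$ lies in every such bundle while at most one degree of $G$ is divisible by $2^2\cdot 11$, all these bundles share a single value of $\theta(1)$; your statement ``at most one such bundle occurs'' is not literally what follows (two bundle types can coexist when they share the degree $44$), but the common-$\theta(1)$ conclusion is what bounds the contribution. The non-invariant and dihedral cases are then genuinely pinned to the slots $66$, $110$, $165$ because there the \emph{index} $|G:I|$ itself is divisible by one of these, independent of $\theta(1)$.

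Two further points. First, your budget is tighter than the paper's: using the uniform bound $|\cB_1|\le 4$ from Lemma~\ref{lem:L2MInvariant} forces you to prove $|\cB_2|\le 10$, whereas the paper only proves $|\cB_2|\le 12$ for $G/N\cong\PSL_2(11)$ and compensates by observing that in that case $G=M$, so every $\theta\in\cA_1$ is $G$-invariant and ($12\in\cd(G/N)$ forcing $\theta(1)=1$) one gets $|\cB_1|\le 1$; for $G/N\cong\PGL_2(11)$ it proves the much stronger $|\cB_2|\le 4$. You would do well to split the two isomorphism types the same way rather than aim at a single bound. Second, a factual slip: $\mathrm{D}_{10}$ is not maximal in $\PSL_2(11)$ (it lies inside $\Alt_5$); the maximal subgroups are two classes of $\Alt_5$ of index $11$, $C_{11}\rtimes C_5$ of index $12$, and $\mathrm{D}_{12}$ of index $55$. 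This does not break your $11$-divisibility reduction, but it would distort the final case count.
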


\begin{proof} We have that either $G/N\cong\PGL_2(11)$ or $\PSL_2(11).$

(a) Assume that $G/N\cong\PGL_2(11).$ We have $\cd(G/N)=\{1,10,11,12\}$.

From Lemma \ref{lem:L2MInvariant}, we have that $|\cB_1|\le 6.$ We next claim that $|\cB_2|\le 4$ and since $|\cd(G/N)|=4,$ we obtain that $|\cd(G)|\le |\cB_1|+|\cB_2|+|\cd(G/N)|\le 6+4+4=14<19.$

Let $\theta\in\cA_2$ and let $I=I_G(\theta).$ Then $I\neq G$ and let $H/N$ be a maximal subgroup of $G/N$ containing $I/N.$ Then $H/N$ does not contain the socle $M/N\cong\PSL_2(11)$ so $H/N\cong (C_{11}:C_5):C_2,\textrm{D}_{24},\Sym_4$ or $\textrm{D}_{20}$ with index $|G:H|=2^2\cdot 3,5\cdot 11,5\cdot 11$ or $2\cdot 3\cdot 11,$ respectively.

The first case cannot occur since $12\in\cd(G/N)$ and $H/N$ is nonabelian. If the last case holds, then $|\cd(G|\theta)|\leq 1.$

Assume that $H/N\cong \textrm{D}_{24}.$ If $\theta$ is $H$-invariant, then $\cd(G|\theta)=\{1,2\}\cdot 5\cdot 11\theta(1)$ or every degree in $\cd(G|\theta)$ is divisible by $2\cdot 5\cdot 11\theta(1).$ If the first case holds, then $\theta(1)=1$ and if the second case holds, then $|\cd(G|\theta)|=1.$  If $\theta$ is not $H$-invariant, then $|H:I|$ is divisible by $2$ or $3$ and hence every degree in $\cd(G|\theta)$ is divisible by $2\cdot 5\cdot 11$ or $3\cdot 5\cdot 11.$

Assume that $H/N\cong \Sym_4.$ Assume that $\theta$ is $H$-invariant. Then $\cd(G|\theta)=\{1,2,3\}5\cdot 11\theta(1)$ or $\{2,4\}5\cdot 11\theta(1)$. The latter case cannot occur and  if the first case holds, then $\theta(1)=1$.
If $\theta$ is not $H$-invariant, then $|H:I|$ is divisible by $2$ or $3$ so every degree in $\cd(G|\theta)$ is divisible by either $2\cdot 5\cdot 11$ or $3\cdot 5\cdot 11.$ Hence $|\cB_2|\le 4.$

(b) Assume $G/N\cong\PSL_2(11).$ Then $\cd(G/N)=\{1,5,10,11,12\}.$

Assume that $\theta\in\cA_1.$ Then $\theta$ is $G$-invariant and thus $\cd(G|\theta)=\{1,5,10,11,12\}\theta(1)$ or $\{6,10,12\}\theta(1).$
Since $12=2^2\cdot 3\in\cd(G/N),$ we deduce that $\theta(1)=1$ and thus $|\cB_1|\le 1.$

Assume that $\theta\in\cA_2.$ Then $I=I_G(\theta)\neq G$ and let $H/N$ be a maximal subgroup of $G/N$ containing $I/N.$ Then $H/N\cong\Alt_5,C_{11}:C_5$ or $\textrm{D}_{12}$ with index $|G:H|=11,2^2\cdot 3$ or $5\cdot 11.$ As above, the second case cannot happen.

Assume that $H/N\cong \textrm{D}_{12}.$ In this case $\cd(G|\theta)$ contains $5\cdot 11$ or a degree divisible by $2\cdot 5\cdot 11$ or $3\cdot 5\cdot 11$

Assume that $H/N\cong \Alt_5.$ Then $|G:H|=11.$

If $\theta$ is $H$-invariant, then $\cd(G|\theta)=\{1,3,4,5\}11\cdot\theta(1)$ or $\{2,4,6\}11\cdot\theta(1).$ If the latter case holds, then $\theta(1)=1;$ if the former case holds, then $\theta(1)=1$ or $\theta(1)$ is a prime and since $2^2\cdot 11$ divides some degree in $\cd(G|\theta)$, there is only one possibility for $\theta(1).$

Assume that $\theta$ is not $H$-invariant and let $K/N$ be a maximal subgroup of $H/N$ containing $I/N$. Then $K/N\cong\Alt_4,\Sym_3$ or $\textrm{D}_{10}$ with $|H:K|=5,10$ or $6$, respectively. If the last two cases holds, then $|\cd(G|\theta)|=1$ and the unique degree in $\cd(G|\theta)$ is divisible by either $2\cdot 3\cdot 11$ or $2\cdot 5\cdot 11.$

Assume $K/N\cong \Alt_4.$ If $I=K,$ then $\cd(G|\theta)=\{1,3\}5\cdot 11\cdot \theta(1)$ or $\{2\cdot 5\cdot 11\cdot \theta(1)\}$. If the first case holds, then $\theta(1)=1.$ If $I\neq K,$ then $|I:K|$ is divisible by either $2$ or $3$, so every degree in $\cd(G|\theta)$ is divisible by either $2\cdot 5\cdot 11$ or $3\cdot 5\cdot 11.$

Thus $|\cB_2|\le 12.$ Therefore, $|\cd(G)\le |\cB_1|+|\cB_2|+|\cd(G/N)|\le 12+1+5=18.$
\end{proof}

\begin{proof}[\textbf{Proof of Theorem \ref{th:L2q}}] If $q=7,8,9$ or $11$, then Theorem \ref{th:L2q} follows from Lemmas \ref{lem:L2(7)}, \ref{lem:L2(8)}, \ref{lem:L2(9)} and \ref{lem:L2(11)}, respectively.

Next, assume that $q>8$ is even. Then $|\cB_1|\le 4$ and $|\cB_2|\le 5$ by Lemmas \ref{lem:L2MInvariant} and \ref{lem:L2even}, respectively. As $|\cd(G/N)|\leq 6,$ we see that $|\cd(G)|\le |\cB_1|+|\cB_2|+|\cd(G/N)|\le 4+5+6<19.$

Finally, assume that $q\ge 13$ is odd. Then $|\cB_1|\le 4$ and $|\cB_2|\le 9$ by Lemmas \ref{lem:L2MInvariant} and \ref{lem:L2odd}, respectively. As $|\cd(G/N)|\leq 6,$ we see that $|\cd(G)|\le |\cB_1|+|\cB_2|+|\cd(G/N)|\le 4+9+6=19.$
\end{proof}

We now can prove Theorem \ref{th:thispaper}.

\begin{proof}[Proof of Theorem \ref{th:thispaper}]
We are assuming that $G$ satisfies the two-prime hypothesis and has a composition factor isomorphic to $\PSL_2 (q)$ for some  prime power $q \ge 7$.  In other words, there is a composition series for $G$ that has $\PSL_2 (q)$ as a composition factor for $G$.  Consider a chief series for $G$.  Recall that we may refine our chief series to a composition series for $G$.  By the Jordan-H\"older theorem, we know that all composition series for $G$ have the same composition factors.  Thus, we can find normal subgroups $L < K$ in our chief series so that $K/L$ is one of the chief factors and $L \le V < U \le K$ so that $U$ is subnormal in $K$ and $V$ is normal in $U$ satisfying $U/V \cong \PSL_2 (q)$.  Since $K/L$ is a chief factor, we know that $K/L$ is the direct product of copies of some simple group $S$.  The composition factors of $K/L$ will all be isomorphic to $S$ and since $U/V$ is a composition factor of $K/L$, we conclude that $S \cong \PSL_2 (q)$.  On the other hand, we know from Theorem \ref{chief} that all of the nonabelian chief factors of $G$ that are not isomorphic to $\Alt_5 \times \Alt_5$ are simple.  Since $q \ge 7$, we know that $\Alt_5 \not\cong \PSL_2 (q)$, so we conclude that $K/L$ is simple and in particular that $K/L$ is isomorphic to $\PSL_2 (q)$.

Let $C/L = C_{G/L} (K/L)$.  Observe that $C$ is normal in $G$ and $C \cap K = L$, so $CK/C \cong K/L$.  We know that $G/CL$ is isomorphic to a subgroup of ${\rm Aut} (K/L)$, and thus, we can conclude that $G/CL$ is an almost simple group.  In particular, $G$ now satisfies the hypotheses of Theorem \ref{th:L2q}, and the conclusion follows from there.
\end{proof}

We also can prove Theorem \ref{th:main}.

\begin{proof}[Proof of Theorem \ref{th:main}]
If $G$ has no composition factors isomorphic to $\PSL_2 (q)$ for any prime power $q$, then apply Theorem 1.1 of \cite{LLT}.  Otherwise, since $G$ is nonsolvable, and does not have any composition factors isomorphic to $\Alt_5$, then it must have some composition factor isomorphic to $\PSL_2 (q)$ for some prime power $q \ge 7$ and we apply Theorem \ref{th:thispaper} to obtain the conclusion.
\end{proof}

\end{document}